\newcommand{\smb}{\left[\begin{smallmatrix}}
\newcommand{\sme}{\end{smallmatrix}\right]}
\newcommand{\trp }{*}
\newcommand{\myparen}[1]{\left(#1\right)}
\DeclareMathOperator*{\rank}{rank}
\newcommand{\diag}[1]{\ensuremath{\operatorname{diag}\!\myparen{#1}}}
\newcommand{\half}{\ensuremath{\frac{1}{2}}}
\newcommand\R{\ensuremath{\mathbb{R}}}
\newcommand{\Rn}{\ensuremath{\R^{n}}}
\newcommand{\Rnn}{\ensuremath{\R^{n\times n}}}
\newcommand{\Rmm}{\ensuremath{\R^{m\times m}}}
\newcommand{\Rnm}{\ensuremath{\R^{n\times m}}}
\newcommand{\Rnr}{\ensuremath{\R^{n\times r}}}
\newcommand{\Rmr}{\ensuremath{\R^{m\times r}}}
\newcommand{\Rk}{\ensuremath{\R^{k}}}
\newcommand\C{\ensuremath{\mathbb{C}}}
\newcommand{\Cnn}{\ensuremath{\C^{n\times n}}}
\newcommand{\Cnr}{\ensuremath{\C^{n\times r}}}
\newcommand{\Cmr}{\ensuremath{\C^{m\times r}}}
\newcommand{\cA}{\ensuremath{\mathcal{A}}}
\newcommand{\cB}{\ensuremath{\mathcal{B}}}
\newcommand{\cC}{\ensuremath{\mathcal{C}}}
\newcommand{\cL}{\ensuremath{\mathcal{L}}}
\newcommand{\cO}{\ensuremath{\mathcal{O}}}
\newcommand{\cR}{\ensuremath{\mathcal{R}}}
\newcommand{\cW}{\ensuremath{\mathcal{W}}}
\newcommand{\fR}{\ensuremath{\mathfrak{R}}}
\newcounter{mymac@matlab}
  \newcommand{\matlab}{MATLAB%
   \ifnum\value{mymac@matlab}<1%
   \textsuperscript{\textregistered}%
   \setcounter{mymac@matlab}{1}%
   \fi%
  }
	\newcommand{\intel}{Intel\textsuperscript{\textregistered}}
  \newcommand{\coretwo}{Core\texttrademark 2}
\newcommand{\kmax}{\ensuremath{k_{\max}}}
\newtheorem{defin}{Definition}
\newtheorem{theorem}{Theorem}
\newtheorem{remark}{Remark}
\newtheorem{corollary}[theorem]{Corollary}
\title{Inexact linear solves in the low-rank ADI iteration for large Sylvester equations} %
\date{}
\author{Patrick K\"{u}rschner\thanks{Leipzig University of Applied Sciences (HTWK Leipzig), Center for Mathematics and Natural Sciences,
 Leipzig, Germany, {\tt
  patrick.kuerschner@htwk-leipzig.de}} }
\begin{document}
\maketitle

\begin{abstract}
We consider the low-rank alternating directions implicit (ADI) iteration for approximately solving large-scale algebraic Sylvester equations.
Inside every iteration step of this iterative process a pair of linear systems of equations has to be solved. We investigate the situation when those inner linear systems are solved inexactly by an iterative methods such as, for example, preconditioned Krylov subspace methods. The main contribution of this work are thresholds for the required accuracies regarding the inner linear systems which dictate when the employed inner Krylov subspace methods can be safely terminated. 
The goal is to save computational effort by solving the inner linear system as inaccurate as possible without endangering the functionality of the low-rank Sylvester-ADI method. Ideally, the inexact ADI method mimics the convergence behaviour of the more expensive exact ADI method, where the linear systems are solved directly. Alongside the theoretical results, also strategies for an actual practical implementation of the stopping criteria are developed. Numerical experiments confirm the effectiveness of the proposed strategies.
\end{abstract}
	
\section{Introduction}
We consider the numerical solution of large scale algebraic Sylvester equations of the form  
\begin{align}\label{sylv}
 AX+XB=-fg^\trp
\end{align}
with large coefficient matrices $A\in\Rnn,~B\in\Rmm$, the sought after matrix $X\in\Rnm$, and a factorized right hand side with $f\in\Rnr,g\in\Rmr$ which have full column rank $r\ll n,m$.
Sylvester equations play a vital role in many applications areas, for instance, control theory, model order reduction~\cite{Antoulas05,SorA02}, image processing~\cite{CalR96}, and they often arise as crucial ingredient in algorithms in more complicated matrix equations~\cite{BreSS16,Jaretal18,BreS15,PalK2021}. We refer to the survey articles \cite{BhaR97,Simoncini16} for further details and examples.

For equations defined by small to medium sized 
coefficient matrices, direct methods based on matrix factorizations can be used such as the Bartels-Stewart and related methods \cite{BarS72}.
When only one coefficient matrix ($A$ or $B$) is large and sparse, while the other one is small and dense, 
special methods are applicable~\cite{SorA02,GolV13}.

We are interested here in the case when both coefficients, $A$ and $B$, are large and sparse matrices and the right hand side is of low-rank. For this scenario, one can show that the singular values of the solution $X$ typically decay rapidly towards zero~\cite{Gra04, Sab07, Clo23}. This motivates to compute a solution approximation of low-rank in factored form $X\approx Z\Gamma Y^\trp$, where $Z,Y$ are thin rectangular matrices and a middle matrix $\Gamma$ of a appropriate size.
In the recent years, different (rational) Krylov subspace projection methods have been proposed for this purpose, e.g., \cite{HuR92,GueJR02,BreSS16,PalS18,KresLMP21}. Another method for this situation, and the focus of this study, is the low-rank version of the alternating
directions implicit (ADI) iteration \cite{Wac88,CalR96,BenLT09,Wac13,BenK14}. 

Inside every step of the Sylvester ADI iteration, linear systems of equations have to be solved. Solving these inner linear systems is computationally to most expensive part of every iteration step and if direct solvers are not applicable, a further, inner iteration can be used in order to obtain approximate solutions. Typically one uses preconditioned Krylov subspace methods for this purpose. We develop estimates for the required accuracies regarding those linear systems which will dictate when the inner Krylov subspace methods can be safely terminated, thus potentially saving some computational effort without endangering the functionality of the low-rank Sylvester ADI method. A speciality of the Sylvester ADI is that a pair of two different linear systems has to be solved in every step, defined by shifted versions of the coefficients $A$ and $B$. Hence, one has to determine two connected stopping criteria, for which strategies will be presented.

The paper is structured as follows: in Section~\ref{sec:adi} we review the Sylvester ADI iteration and its properties, which we then modify to incorporate inexact solutions of the inner linear systems. Afterwards in Section~~\ref{sec:inner}, we develop dynamic inner stopping criteria for the iterative solution of the arising linear systems and analyse their effect on the Sylvester ADI iteration. We also discuss the actual implementation of these stopping criteria inside the ADI iteration. Section~~\ref{sec:num} demonstrates the findings by some numerical experiments and Section~~\ref{sec:concl} concludes and gives some potential future research directions.

\subsection{Notation}
Throughout the paper, if not stated otherwise, we use $\|\cdot\|$ to denote the Euclidean vector and associated induced matrix norm. $(\cdot)^\trp$ stands for the transpose for real and, respectively, complex conjugate transpose for complex matrices and vectors. The identity matrix of dimension $k$ is denoted by $I_k$ with the subscript omitted if the dimension is clear from the context. The $k$th column of the identity matrix is $e_k$ and $\boldsymbol{1}_k:=[1,\ldots,1]^\trp\in\Rk$. The spectrum of a matrix $A\in\Cnn$ and a matrix pair $(A,M)$ is denoted by $\Lambda(A)$ and $\Lambda(A,M)$, respectively. The smallest (largest) eigenvalue of a matrix $X$ are denoted by $\lambda_{\min}(A)$ ($\lambda_{\max}(A)$), $\rho(X)=\max\lbrace|\lambda|,~\lambda\in\Lambda(X)\rbrace$ is the spectral radius, and $\cW(A)=\lbrace z=x^\trp Ax:~0\neq x\in\Rn,~\|x\|=1\rbrace$ is the field of values. The symbol $\otimes$ denotes the Kronecker product. 
For $A\in\Cnn$, $\beta\in\C$, and $\alpha\notin-\Lambda(A)$, 
a two-parameter Cayley transformation is given by
\begin{align}\label{cayley}
\cC(A,\alpha,\beta)=(A-\beta I_n)(A+\alpha I_n)^{-1}=I_n-(\beta+\alpha)(A+\alpha I_n)^{-1}.
\end{align}
\section{The inexact low-rank ADI iteration for Sylvester equations}\label{sec:adi}
The Sylvester equation~\eqref{sylv} has a unique solution, if $\Lambda(A)\cap\Lambda(-B)=\emptyset$. This is, in particular, fulfilled if $\Lambda(A),\Lambda(B)\subset\C_-$ (or $\Lambda(A),\Lambda(B)\subset\C_+$) which is assumed in the remainder.  
\subsection{Derivation and review of basic properties}
For every $\beta\notin\Lambda(A),~\alpha\notin\Lambda(B)$,
$\alpha\neq\beta$ the continuous-time Sylvester equation~\eqref{sylv} is equivalent to the discrete-time Sylvester equation
\begin{align}\label{discrete_sylv}
 X=\cC(A,\beta,\alpha)X\cC(B,\alpha,\beta)+T(\alpha,\beta),
\end{align}
where
\begin{align}\label{Tab}
T(\alpha,\beta):=-(\beta+\alpha)(A+\beta I_n)^{-1}fg^\trp(B+\alpha I_m)^{-1}
\end{align}
and $\cC(\cdot,~\cdot,~\cdot)$ are two-parameter Cayley transformations~\eqref{cayley}.  
 This motivates the instationary iteration for $k=1,2,\ldots$
\begin{align}\label{sylvadi}
\begin{split}
 X_{k}&=\cC(A,~\beta_k,~\alpha_k)X_{k-1}\cC(B,~\alpha_k,~\beta_k)+T(\alpha_k,\beta_k)\\
&=\cC_k(A)X_{k-1}\cC_k(B)+T_k,
\end{split}
\end{align}
where we introduced varying parameters $\alpha_k,~\beta_k$ throughout the iteration. This is the alternating directions implicit iteration (ADI) for algebraic Sylvester equations~\cite{Wac88}.
The error and the Sylvester residual matrix after $k$ steps of the Sylvester ADI scheme~\eqref{sylvadi}
are given by
\begin{align}\label{fadi_error}
 X_k-X&=\cA_k(X_0-X)\cB_k,\quad \cR_k=AX_k+X_kB-fg^\trp=\cA_k\cR_0\cB_k,\\\nonumber
\cA_k&:=\prod\limits_{i=1}^k\cC(A,\beta_i,
\alpha_i),\quad \cB_k:=\prod\limits_{i=1}^k\cC(B,\alpha_i,\beta_i).
\end{align}
The iteration is convergent, e.g., when $\rho(\cC_k(A))\rho(\cC_k(B))<1$ for all $k\geq 1$. 
A rapid decrease of error and residual can thus be achieved by minimizing this product of the spectral radii of $\cA_k$ and $\cB_k$ which leads to the ADI shift parameter problem
\begin{align}\label{adi_shift}
\min_{\alpha_i,\beta_i\in\C}\Bigg(\mathop{\max_{1\leq\ell\leq n}}_{1\leq j\leq
m}
\prod\limits_{i=1}^k\left|\frac{
(\lambda_{\ell}-\alpha_i)(\mu_j-\beta_i) } {
(\lambda_{\ell}+\beta_i)(\mu_j+\alpha_i)}\right|\Bigg),\quad
\lambda_{\ell}\in\Lambda(A),~\mu_j\in\Lambda(B).
\end{align} 
Various approaches have been proposed for~\eqref{adi_shift}, e.g., precomputing shift parameters \cite{Sab07,BenLT09,Wac13} in an offline-phase before the actual ADI iteration, either based on elliptic functions regions or heuristic strategies. In contrast, more recent developments were focussed at computing one pair $(\alpha_k,\beta_k)$ of parameters at a time online during the running ADI iteration~\cite{BenKS14,Kue16}. In this study, we assume that we the pairs $(\alpha_k,\beta_k)$ which guarantee convergence of~\eqref{sylvadi} are given in advance. This is purely for reasons of simplification, as the upcoming results on the stopping criteria do not depend on the way the shifts are generated.

A low-rank version\footnote{This version is also called factored ADI iteration (fADI).} of the ADI iteration~\cite{BenLT09} is obtained by setting $X_0=0$ in~\eqref{sylvadi} and exploiting that the matrices $(A+\beta_k I)^{-1}$ and $(A-\alpha_k I)$ as well as $(B+\alpha_k I)^{-1}$ and $(B-\beta_k I)$ commute. This allows to formulate~\eqref{sylvadi} for $k\geq 1$ as
\begin{subequations}\label{fadi_iter}
 \begin{align}\label{fadi_iter_a}
 z_1&=(A+\beta_1I_n)^{-1}f,\quad 
s_1=(B+\alpha_1I_m)^{-H}g,\\\label{fadi_iter_b}
z_k&=z_{k-1}+(\beta_k-\alpha_{k-1})(A+\beta_kI_n)^{-1}z_{k-1},\\
\label{fadi_iter_c}
y_k&=y_{k-1}+\overline{(\alpha_k-\beta_{k-1})}(B+\alpha_kI_m)^{-H}y_{k-1}
\end{align}
which produces solution approximations in low-rank format
 \begin{align*}
 X\approx X_{k}&=Z_k\Gamma_kY_k^*\quad\text{with}\quad
Z_k=[z_1,\ldots,z_k]\in\C^{n\times kr},\quad \\\Gamma_{k}&=\diag{(\Gamma_{k-1},(\beta_{k} -\alpha_{k})I_r)}\in\C^{kr\times kr},\quad Y_k=[y_1,\ldots,y_k]\in\C^{m\times kr}.
\end{align*}
\end{subequations}
It can be shown~\cite{BenK14,Kue16} that the residual matrix~\eqref{fadi_error} at step $k$ has at most rank $r$ and is given by the low-rank factorization
\begin{subequations}\label{adi_resexact}
\begin{align}
 \cR_k&=w_kt_k^*,\quad\text{where}\\
w_k&:=\cA_kf=w_0+Z_k\boldsymbol{\gamma}_k=w_{k-1}+\gamma_k(A+\beta_kI)^{-1}w_{k-1}\in\Cnr,\\
t_k&:=\cB^*_kg=t_0+Y_k\overline{\boldsymbol{\gamma}_k}=t_{k-1}+\overline{\gamma_k}(B+\alpha_kI)^{-*}t_{k-1}\in\Cmr,\\
\text{with}\quad w_0&:=f,\quad t_0:=g,\quad \boldsymbol{\gamma}_k:=[\gamma_1,\ldots,\gamma_k]^\trp\otimes I_r,\quad \gamma_k:=-(\beta_k+\alpha_k).
\end{align}
\end{subequations}
 This allows to compute the residual norm $\|\cR_k\|_2=\|w_kt_k^*\|_2$ more efficiently and the residual factors $w_k,~t_k$ can be directly integrated into the low-rank Sylvester ADI iteration~\eqref{fadi_iter} which then becomes
\begin{subequations}\label{lradi_basic}
 \begin{align}\label{lradi_1}
	z_k	&=(A+\beta_k I)^{-1}w_{k-1},\quad w_k=w_{k-1}+\gamma_kz_k,\quad w_0:=f\\\label{lradi_2}
	y_k	&=(B+\alpha_k I)^{-*}t_{k-1},\quad t_k=t_{k-1}+\overline{\gamma_k}y_k,\quad t_0:=g,
\end{align}
\end{subequations}
which is the form of the iteration used nowadays.
\paragraph{Generalized Sylvester equations}
For two given additional nonsingular matrices $M\in\Rnn$ and $C\in\Rmm$, generalized Sylvester equations are of the form
\begin{align}\label{gsylv}
 AXC+MXB=-fg^\trp.
\end{align}
The low-rank ADI iteration can be generalized in a straightforward manner to
\begin{align}\label{gadi}
\begin{split}
 X_{k}&=\cC(AM^{-1},\beta_k,\alpha_k)X\cC(C^{-1}B,\beta_k,\alpha_k)+\hat T_k\quad\text{with}\\
\hat T_k&:=-\gamma_k(A+\beta_k M)^{-1}fg^\trp(B+\alpha_k C)^{-1}.
\end{split}
\end{align}

The corresponding low-rank iteration~\eqref{lradi_basic} becomes in that case~\cite{BenK14,Kue16}
\begin{subequations}\label{lradi_g}
 \begin{align}\label{lradi_gA}
	z_k	&=(A+\beta_k M)^{-1}w_{k-1},\quad w_k=w_{k-1}+\gamma_kMz_k,\quad w_0:=f,\\\label{lradi_gB}
	y_k	&=(B+\alpha_k C)^{-*}t_{k-1},\quad t_k=t_{k-1}+\overline{\gamma_k}C^*y_k,\quad t_0:=g.
\end{align}
\end{subequations}
The spectra $\Lambda(A)$ and $\Lambda(B)$ in~\eqref{adi_shift} have to be replaced by the spectra $\Lambda(A,M)$ and $\Lambda(B,C)$, respectively.
\begin{remark}
Note that
 \begin{align}\label{lradi_g_wt}
\begin{split}
	w_k&=w_{k-1}+\gamma_kM(A+\beta_k M)^{-1}w_{k-1}=(A-\alpha_kM)(A+\beta_k M)^{-1}w_{k-1}\\
	&=(AM^{-1}-\alpha_k I)(AM^{-1}+\beta_k I)^{-1}w_{k-1}=\cC_k(AM^{-1})w_{k-1},\\
	t_k&=t_{k-1}+\overline{\gamma_k}C^*(B+\alpha_k C)^{-*}t_{k-1}=(B-\beta_k C)^*(B+\alpha_k C)^{-*}t_{k-1}\\
	&=(C^{-1}B-\beta_k I)^*(C^{-1}B+\alpha_k I)^{-*}t_{k-1}=\cC^*_k(C^{-1}B)t_{k-1}
	\end{split}
\end{align}
indicating that the matrices of $AM^{-1}$, $C^{-1}B$ appear only for notational purposes and will not formed explicitly in an actual implementation.
\end{remark}
We will mostly consider the more general version~\eqref{lradi_g} in the remainder.

The most expensive parts inside each step of the outer ADI iteration~\eqref{lradi_basic},\eqref{lradi_g} are the solutions of the inner shifted linear system with
$(A+\beta_kM)$ and $(B+\alpha_k C)^\trp$ with $r$ right hand sides for $z_k$ and $y_k$. 

Allowing inexact solutions of these linear systems but keeping all remaining steps in~\eqref{lradi_basic},\eqref{lradi_g} unchanged
gives the \emph{inexact low-rank ADI iteration} illustrated in Algorithm~\ref{alg:lradi}. 
Let
\begin{align}\label{linres}
\begin{split}
r^A_k&:=w_{k-1}-(A+\alpha_k M)\tilde z_k\quad\text{with}\quad\|r^A_k\|\leq \delta^{A}_k,\\
r^B_k&:=t_{k-1}-(B+\beta_k C)^\trp\tilde y_k\quad\text{with}\quad\|r^B_k\|\leq \delta^{B}_k
\end{split}
\end{align}
be the residual vectors with respect to the linear systems and approximate solutions $\tilde z_{k}$, $\tilde y_{k}$. We will often refer to $r^A_k,~r^B_k$ as inner residuals. The quantities $\delta^{A}_k,~\delta^{B}_k>0$ indicate the residual tolerances with respect to the inner linear systems at step $k$ of the inexact LR-ADI iteration.

\begin{algorithm2e}[t]
\SetEndCharOfAlgoLine{}
\SetKwInOut{Input}{Input}\SetKwInOut{Output}{Output}
  \caption[Inexact low-rank ADI for Sylvester eqns.]{Inexact low-rank ADI (LR-ADI) method for Sylvester equations}
  \label{alg:lradi}
    \Input{$A,~B,~M,~C,~f,~g$ as in \eqref{gsylv}, 
    shift parameters $\lbrace \alpha_1,\dots,\alpha_{k}\rbrace$, $\lbrace
\beta_1,\dots,\beta_{k}\rbrace$, and tolerance $0<\tau\ll1$.}
    \Output{$Z_{k}\in\C^{n\times rk}$,
$Y_{k}\in\C^{m\times rk}$, $\Gamma_{k}\in\C^{rk\times rk}$ such that
    $Z_{k}\Gamma_{k}Y_{k}^H\approx X$.}
 $w_0=f,~t_0=g$, $Z_0=\Gamma_0=Y_0=[~]$, $j=1$.\;
    \While{$\|w_{k-1}T^H_{k-1}\|\geq\tau\|FG^\trp\|$}{%
Approximately solve the linear systems for $z_k,~y_k$:
\begin{align}\label{gfadi_incsolves}
(A+\beta_kM)z_k&=w_{k-1},\quad\|r^A_k\|=\|w_{k-1}-(A+\beta_kM)z_k\|\leq\delta^A_k,\\
(B+\alpha_kC)^*y_k&=t_{k-1},\quad\|r^B_k\|=\|t_{k-1}-(B+\alpha_kC)^*y_k\|\leq\delta^B_k.
\end{align}\;
$\gamma_k:=-(\beta_k+\alpha_k)$.\;
$w_k=w_{k-1}+\gamma_kMz_k,\quad$
$t_k=t_{k-1}+\overline{\gamma_k}C^*y_k$.\nllabel{lradi_resup}\;
$Z_{k}=[Z_{k-1},~z_k],~Y_{k}=[Y_{k-1},~y_k],
~\Gamma_{k}=\diag{\Gamma_{k-1},~\gamma_kI_r}$.\;
$k=k+1$.\;
}
\end{algorithm2e}
\begin{remark}
In this work, 'inexact' means that the solution process of the linear systems is the only source of inexactness in the LR-ADI iteration. We do not consider other errors introduced, e.g., by the finite-precision arithmetic.
\end{remark}
\begin{remark}
 An inexact version of the dense iteration~\eqref{sylvadi} for positive definite $A,B$, $M=I_n,~C=I_m$ and fixed shifts $\alpha_k=\alpha,~\beta_k=\beta,~\forall~k\geq 1$ is discussed in~\cite{LZZ20}. The motivation in~\cite{LZZ20} is to ensure asymptotic convergence of~\eqref{sylvadi} under inexact inner solves. In contrast, our analysis is focused on the low-rank iteration with variable shifts and is, moreover, based on the goal to make the behaviour of the inexact ADI iteration as close as possible to that of the exact iteration. Note that by our assumptions on the shift parameters, the exact ADI iteration converges.
\end{remark}
\subsection{Properties of the inexact low-rank Sylvester ADI iteration}
Before we can pursue stopping criteria for the inexact low-rank Sylvester ADI iteration, we need to generalize some results for the exact iteration~\cite[Corollary~3.16]{Kue16} as well as for the inexact iteration for Lyapunov equations~\cite[Theorem~3.2]{KueF20}.
\begin{theorem}
\label{th:adi_AZ_BTY}
The low-rank solution factors
$Z_k$, $Y_k$ constructed after $j$ steps of the inexact LR-ADI iteration
(Algorithm~\ref{alg:lradi}) 
satisfy the identities
\begin{align}\label{adi_AZ_BTY}
 AZ_{k}&=MZ_{k}\boldsymbol{\sigma}^\alpha_{k}+w_kE_k^\trp-S_k^A,
 &B^\trp Y_{k}&=C^\trp Y_{k}\boldsymbol{\sigma}^\beta_{k}+t_kE_k^\trp-S_k^B\\\nonumber
\text{with}\quad\boldsymbol{\sigma}^\alpha_{k}&
:=
\left[\begin{smallmatrix}
 \alpha_1&&&\\
-\gamma_2&\alpha_2&&\\
\vdots&&\ddots&\\
-\gamma_k&\cdots&-\gamma_k&\alpha_k
\end{smallmatrix}\right]\otimes I_r,
\quad
&\boldsymbol{\sigma}^\beta_{k}&:=
\left[\begin{smallmatrix}
 \overline{\beta_1}&&&\\
-\overline{\gamma_2}&\overline{\beta_2}&&\\
\vdots&&\ddots&\\
-\overline{\gamma_k}&\cdots&-\overline{\gamma_k}&\overline{\beta_k}
\end{smallmatrix}\right]\otimes I_r\in\C^{jr\times jr},\\\nonumber
E_k&:=\boldsymbol{1}_k\otimes I_r\in\R^{jr\times r},&&\\\nonumber
S_k^A&:=[r_1^A,\ldots,r_k^A]\in\C^{n\times rk},\quad &S_k^B&:=[r_1^B,\ldots,r_k^B]\in\C^{m\times rk}
\end{align}
contain the residuals of the linear systems~\eqref{linres}.
\end{theorem}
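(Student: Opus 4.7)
The plan is to prove both identities by induction on $k$, since they are structurally parallel. The key one-step algebraic fact is: from the inner residual equation in~\eqref{linres}, the update $w_k=w_{k-1}+\gamma_k Mz_k$, and the definition $\gamma_k=-(\alpha_k+\beta_k)$, one readily obtains
\begin{align*}
Az_k=-\beta_k Mz_k+w_{k-1}-r^A_k=\alpha_k Mz_k+w_k-r^A_k,
\end{align*}
and symmetrically $B^\trp y_k=\overline{\beta_k}C^\trp y_k+t_k-r^B_k$ from the second system together with $t_k=t_{k-1}+\overline{\gamma_k}C^*y_k$ and $\overline{\gamma_k}=-(\overline{\alpha_k}+\overline{\beta_k})$. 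These already cover the base case $k=1$, since then $AZ_1=Az_1$, $MZ_1\boldsymbol{\sigma}^\alpha_1=\alpha_1Mz_1$, $w_1E_1^\trp=w_1$, and $S_1^A=r_1^A$ (and likewise for the $Y$-identity).

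For the induction step from $k-1$ to $k$, I would use the block partitionings
\begin{align*}
Z_k=[Z_{k-1},z_k],\quad \boldsymbol{\sigma}^\alpha_k=\begin{bmatrix}\boldsymbol{\sigma}^\alpha_{k-1} & 0 \\ -\gamma_k(\boldsymbol{1}_{k-1}^\trp\otimes I_r) & \alpha_k I_r\end{bmatrix},\quad w_kE_k^\trp=[w_k(\boldsymbol{1}_{k-1}^\trp\otimes I_r),w_k],\quad S_k^A=[S_{k-1}^A,r_k^A],
\end{align*}
and compare the two sides block-columnwise. The last $r$ columns of the proposed identity reduce to $Az_k=\alpha_k Mz_k+w_k-r^A_k$, which is the identity above. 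For the first $r(k-1)$ columns the right-hand side becomes
\begin{align*}
MZ_{k-1}\boldsymbol{\sigma}^\alpha_{k-1}+(w_k-\gamma_k Mz_k)(\boldsymbol{1}_{k-1}^\trp\otimes I_r)-S_{k-1}^A = MZ_{k-1}\boldsymbol{\sigma}^\alpha_{k-1}+w_{k-1}(\boldsymbol{1}_{k-1}^\trp\otimes I_r)-S_{k-1}^A,
\end{align*}
after the telescoping $w_k-\gamma_k Mz_k=w_{k-1}$; by the induction hypothesis this equals $AZ_{k-1}$. The $Y_k$ identity follows from the same argument applied to the second inner system with complex-conjugated shifts.

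The only point needing attention is the staircase pattern of $\boldsymbol{\sigma}^\alpha_k$, whose strictly-lower-triangular entries depend on the row index ($-\gamma_i$ across row $i$) and not on the column. The induction makes this transparent, because each new step appends exactly one constant strip $-\gamma_k(\boldsymbol{1}_{k-1}^\trp\otimes I_r)$, which is precisely what the telescoping $w_{k-1}=w_k-\gamma_k Mz_k$ demands when the $Mz_k$ contribution is absorbed into the earlier columns. All remaining verifications are mechanical substitutions.
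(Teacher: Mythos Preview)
Your proof is correct and rests on the same one-step identity $Az_i=\alpha_i Mz_i+w_i-r_i^A$ (and its $B$-analogue) that the paper uses. The only difference is organizational: the paper avoids induction by unrolling the telescoping all at once, writing $w_i=w_k-M\sum_{j=1}^{k-i}\gamma_{k-j+1}z_{k-j+1}$ for each $i\leq k$ and reading off the $i$th block column of~\eqref{adi_AZ_BTY} directly, whereas you peel off one step $w_{k-1}=w_k-\gamma_k Mz_k$ at a time and invoke the induction hypothesis. Both arguments are equally short and use the same ingredients; your inductive packaging perhaps makes the staircase structure of $\boldsymbol{\sigma}^\alpha_k$ a bit more transparent, while the paper's direct column-by-column computation avoids setting up an induction altogether.
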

\begin{proof}
 The result for $S^A_k=0$, $S^B_k=0$ has been established in~\cite{Kue16}. 
 By construction, it holds $Az_i=w_i+\alpha_i Mz_i-r_i^A$ and $w_i=w_k-M\sum\limits_{j=1}^{k-i}\gamma_{k-j+1}z_{k-j+1}$ for
$i=1,\ldots,k$. Hence,
\begin{align*}
 Az_i=w_k+\alpha_i Mz_i-M\sum\limits_{j=1}^{k-i}\gamma_{k-j+1}z_{k-j+1}-r_i^A,\quad
i=1,\ldots,k
\end{align*}
such that
\begin{align*}
 A[z_1,\ldots,z_k]=[w_k,\ldots,w_k]+M[z_1,\ldots,z_k]
\left(\left[\begin{smallmatrix}
 \alpha_1&&&\\
-\gamma_2&\alpha_2&&\\
\vdots&&\ddots&\\
-\gamma_k&\cdots&-\gamma_k&\alpha_k
\end{smallmatrix}\right]\otimes I_r\right)-S^A_k
\end{align*}
which is the result in~\eqref{adi_AZ_BTY} for $AZ_k$. The result for $B^\trp Y_k$ is developed in the same way using the corresponding relations for $t_k,~y_k,~r_k^B$.
\end{proof}
\begin{theorem}
After $k$ iteration steps of the inexact LR-ADI iteration applied to~\eqref{gsylv}, 
the Sylvester residual matrix is given by
\begin{align}\label{adi_res_inexact}
\begin{split}
 \cR_k&=AZ_k\Gamma_kY_k^*C+MZ_k\Gamma_kY_k^*B+fg^*=w_kt_k^*-\eta^A_k-\eta^B_k,\\
\eta^A_k&:=S^A_k\Gamma_kY_k^*C,\quad \eta^B_k:=MZ_k\Gamma_k(S_k^B)^*.
\end{split}
\end{align}
\end{theorem}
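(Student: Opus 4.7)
The plan is to substitute the identities from Theorem~\ref{th:adi_AZ_BTY} into the definition of $\cR_k$ and show that the residual-free parts assemble to $w_k t_k^*$ exactly as in the exact case, while the contributions of $S_k^A$ and $S_k^B$ produce precisely the advertised $\eta$-corrections.

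Concretely, first I would conjugate-transpose the identity $B^* Y_k = C^* Y_k \boldsymbol{\sigma}^\beta_k + t_k E_k^\trp - S_k^B$ to obtain
$$Y_k^* B = (\boldsymbol{\sigma}^\beta_k)^* Y_k^* C + E_k t_k^* - (S_k^B)^*,$$
exploiting that $E_k$ is real. Substituting this together with $AZ_k = MZ_k \boldsymbol{\sigma}^\alpha_k + w_k E_k^\trp - S_k^A$ into $\cR_k = AZ_k \Gamma_k Y_k^* C + MZ_k \Gamma_k Y_k^* B + fg^*$ expands the residual as
$$\cR_k = MZ_k\bigl[\boldsymbol{\sigma}^\alpha_k \Gamma_k + \Gamma_k (\boldsymbol{\sigma}^\beta_k)^*\bigr] Y_k^* C + w_k E_k^\trp \Gamma_k Y_k^* C + MZ_k \Gamma_k E_k t_k^* + fg^* - S_k^A \Gamma_k Y_k^* C - MZ_k \Gamma_k (S_k^B)^*.$$
The last two terms match $-\eta_k^A - \eta_k^B$ by definition, so the remaining task is to show that everything else collapses to $w_k t_k^*$.

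The main algebraic step, and the only nontrivial one, is the block identity
$$\boldsymbol{\sigma}^\alpha_k \Gamma_k + \Gamma_k (\boldsymbol{\sigma}^\beta_k)^* = -\boldsymbol{\gamma}_k \boldsymbol{\gamma}_k^\trp,$$
which I would verify block-wise: because $\Gamma_k$ is block diagonal and $\boldsymbol{\sigma}^\alpha_k$, $(\boldsymbol{\sigma}^\beta_k)^*$ have complementary triangular patterns, the strictly lower block is $-\gamma_i\gamma_j I_r$ (from $\boldsymbol{\sigma}^\alpha_k\Gamma_k$), the strictly upper block is $-\gamma_i\gamma_j I_r$ (from $\Gamma_k(\boldsymbol{\sigma}^\beta_k)^*$), and the diagonal block is $(\alpha_i+\beta_i)\gamma_i I_r = -\gamma_i^2 I_r$ using $\gamma_i = -(\alpha_i+\beta_i)$.

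Finally, I would use the telescoping consequences of the updates in Algorithm~\ref{alg:lradi}, namely $w_k = f + MZ_k\boldsymbol{\gamma}_k$ and $t_k = g + C^* Y_k \overline{\boldsymbol{\gamma}_k}$, together with $\Gamma_k E_k = \boldsymbol{\gamma}_k$ and $E_k^\trp \Gamma_k = \boldsymbol{\gamma}_k^\trp$, to rewrite the residual-free part as
$$-(w_k-f)(t_k^*-g^*) + w_k(t_k^*-g^*) + (w_k-f)t_k^* + fg^*,$$
which a straightforward expansion reduces to $w_k t_k^*$. Combining with $-\eta_k^A - \eta_k^B$ yields the claim. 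I expect the verification of the block identity above to be the only place requiring care; the remainder is bookkeeping that mirrors the exact derivation given in~\cite{Kue16}.
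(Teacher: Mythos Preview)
Your proposal is correct and follows essentially the same route as the paper's proof: both substitute the identities of Theorem~\ref{th:adi_AZ_BTY} and the relations $w_k=f+MZ_k\boldsymbol{\gamma}_k$, $t_k=g+C^*Y_k\overline{\boldsymbol{\gamma}_k}$ into $\cR_k$, and both rely on the key identity $\boldsymbol{\sigma}^\alpha_k\Gamma_k+\Gamma_k(\boldsymbol{\sigma}^\beta_k)^*+\boldsymbol{\gamma}_k\boldsymbol{\gamma}_k^\trp=0$ (which the paper phrases as ``$\Gamma_k$ solves this small Sylvester equation'') to make the inner bracket vanish. Your explicit blockwise verification of that identity and the final $-(w_k-f)(t_k^*-g^*)+\cdots$ bookkeeping are just a more detailed unpacking of what the paper states in one line.
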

\begin{proof}
By construction, it follows from~\eqref{adi_resexact} that $f=w_k-MZ_k\boldsymbol{\gamma}_k$ and $g=t_k-C^*Y_k\overline{\boldsymbol{\gamma}_k}$. 
Plugging this and the identities of Theorem~\ref{th:adi_AZ_BTY} into the Sylvester residual matrix yields 
\[
\cR_k=MZ_k\left(\boldsymbol{\sigma}^\alpha_{k}\Gamma_k+\Gamma_k(\boldsymbol{\sigma}^\beta_{k})^*+\boldsymbol{\gamma}_k\boldsymbol{\gamma}_k^\trp\right)Y_k^*C+w_kt_k^*-MZ_k\Gamma_k(S_k^B)^*-S^A_k\Gamma_kY_k^*C.
\]
The claim follows upon realizing that $\Gamma_k$ is the solution of the Sylvester equation $\boldsymbol{\sigma}^\alpha_{k}\Gamma+\Gamma(\boldsymbol{\sigma}^\beta_{k})^*+\boldsymbol{\gamma}_k\boldsymbol{\gamma}_k^\trp=0$.
\end{proof}
The above properties show that in the presence of inexact linear solves, there is a discrepancy between in the computed Sylvester residuals $\cR^{\text{comp.}}_k = w_kt_k^*$ and the true residuals $\cR_k^{\text{true}}$ given by \eqref{adi_res_inexact}. 
\begin{defin}[Residual gap]\label{def:resgapADI}
The residual gap after $k$ iteration steps of the low-rank Sylvester ADI is defined as
\[
\Delta \cR_k:=\cR_k^{\text{comp}}-\cR^{\text{true}}_k=S^A_k\Gamma_kY_k^*C+MZ_k\Gamma_k(S_k^B)^*=\eta^A_k+\eta_k^B
\]
\end{defin}
Hence, $\|\cR_k^{\text{comp}}\|=\|w_kt_k^*\|$ is not the correct value of the norm of the Sylvester residual for the current solution approximation $Z_k\Gamma_kY_k^*$. Using it might give a wrong impression of the iteration's progress.
\section{Dynamic residual thresholds for the inner linear systems}\label{sec:inner}
In this section we investigate strategies to make the inner residual norms $\|r^A_k\|,~\|r^B_k\|$ as large as possible without endangering the functionality of the outer ADI iteration. We plan to achieve that the inexact low-rank ADI iteration mimics the exact counterpart up to a very small deviation. 

Similar to the low-rank ADI iteration for Lyapunov equations, it can be easily shown that the low-rank factors $Z_k$ and $Y_k$ span (block) rational Krylov subspaces for $AM^{-1}$ and, respectively, $C^{-\trp}B^\trp$~\cite{LiW02}. Hence, the low-rank ADI iteration can be seen as a (two-sided) rational Krylov subspace method.
A commonly used approach in inexact (rational) Krylov and related methods is to enforce a small norm of the residual gap: $\|\Delta\cR_k\|\leq \varepsilon$. If $\|\cR^{\text{comp.}}_k\|\leq \varepsilon$ then a small true residual norm $\|\cR_k^{\text{true}}\|=\|\cR^{\text{comp.}}_k+\Delta\cR_k\|\leq 2\varepsilon$ can be expected. 
This technique is often coined \textit{relaxation} because it usually leads to increasing (relaxed) inner residual norms if the outer residual norm decrease~\cite{Sim05,BouF05}.
The next theorem generalizes \cite[Thm. 3.3]{KueF20} and provides a theoretical strategy for reducing $\|\Delta\cR_k\|$ in the inexact low-rank Sylvester ADI iteration below a desired threshold after a pre-specified number of outer steps.
\begin{theorem}[Theoretical inner stopping criteria the inexact Sylvester-ADI]\label{thm:ADI_theorelax}
Let the residual gap be given by Definition~\ref{def:resgapADI} with $w_k$, $t_k$, $\gamma_k$ as in~\eqref{lradi_g}. Let $\kmax$ be the maximum number of steps of Algorithm~\ref{alg:lradi} and $0<\varepsilon<1$ a small threshold. Furthermore, let
$c^A_k:=\|\cC_k(AM^{-1})\|$, $c^B_k:=\|\cC_k(C^{-1}B)\|$ and $\check{c}_k:=\max(c_k^A,~c_k^B)+1$.
If for $1\leq k\leq \kmax$, the linear system residuals satisfy
\begin{align}\label{ADI_relax1_theo}
		\check{c}_k(\|r^A_k\|\|t_{k-1}\|+\|r^B_k\|\|w_{k-1}\|+2\|r^B_k\|\|r^A_k\|) \leq \frac{\varepsilon}{\kmax}
\end{align} 
then $\|\Delta\cR_{\kmax}\|\leq\varepsilon$. 
\end{theorem}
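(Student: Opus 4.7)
The plan is to exploit the additive structure of the residual gap together with a telescoping identification inherent to the outer ADI recursions. Since $S^A_k=[r^A_1,\ldots,r^A_k]$, $Y_k=[y_1,\ldots,y_k]$, and $\Gamma_k$ is block-diagonal with blocks $\gamma_i I_r$, Definition~\ref{def:resgapADI} unfolds into
\begin{align*}
\Delta\cR_k
= \sum_{i=1}^{k} r^A_i\,(\gamma_i y_i^* C)
+ \sum_{i=1}^{k} (\overline{\gamma_i} M z_i)\,(r^B_i)^*.
\end{align*}
The crucial observation is that the inner factors of these two sums are exactly the one-step increments of the outer residual factors computed inside Algorithm~\ref{alg:lradi}: the update rules give $\overline{\gamma_i}M z_i = w_i - w_{i-1}$ and, after conjugate transposition, $\gamma_i y_i^* C = (t_i - t_{i-1})^*$.

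To control these increments I would combine the inexact linear-solve identities $z_i=(A+\beta_i M)^{-1}(w_{i-1}-r^A_i)$ and $y_i=(B+\alpha_i C)^{-*}(t_{i-1}-r^B_i)$ with the algebraic rewrites
\begin{align*}
\gamma_i M(A+\beta_i M)^{-1} = \cC_i(AM^{-1}) - I,\qquad
\overline{\gamma_i} C^*(B+\alpha_i C)^{-*} = \cC_i^*(C^{-1}B) - I,
\end{align*}
which mirror the manipulations used in~\eqref{lradi_g_wt}. Substituting these rewrites into the update rules yields
\begin{align*}
w_i - w_{i-1} &= (\cC_i(AM^{-1}) - I)(w_{i-1} - r^A_i),\\
t_i - t_{i-1} &= (\cC_i^*(C^{-1}B) - I)(t_{i-1} - r^B_i),
\end{align*}
and using $\|\cC_i^*(C^{-1}B)\|=c^B_i$ together with $\check{c}_i \geq \max(c^A_i,c^B_i)+1$, the triangle inequality delivers the one-step estimates $\|w_i-w_{i-1}\|\leq \check{c}_i(\|w_{i-1}\|+\|r^A_i\|)$ and $\|t_i-t_{i-1}\|\leq \check{c}_i(\|t_{i-1}\|+\|r^B_i\|)$.

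Plugging these bounds into the split sum for $\Delta\cR_k$ and applying the triangle inequality once more produces
\begin{align*}
\|\Delta\cR_k\| \leq \sum_{i=1}^{k} \check{c}_i\bigl(\|r^A_i\|\|t_{i-1}\| + \|r^B_i\|\|w_{i-1}\| + 2\|r^A_i\|\|r^B_i\|\bigr).
\end{align*}
Setting $k=\kmax$ and invoking~\eqref{ADI_relax1_theo} term-by-term bounds the right-hand side by $\kmax\cdot(\varepsilon/\kmax)=\varepsilon$, which establishes the theorem. The main obstacle I anticipate is the structural step itself: identifying $\gamma_i M z_i$ and $\overline{\gamma_i} C^* y_i$ with the increments $w_i-w_{i-1}$, $t_i-t_{i-1}$ and then recognising those increments as $(\cC_i-I)$ applied to the \emph{perturbed} right-hand sides $w_{i-1}-r^A_i$ and $t_{i-1}-r^B_i$, so that the outer ADI's Cayley amplification factors $c^A_i,\,c^B_i$ enter the bound naturally; once that observation is in place, the remainder is routine norm bookkeeping.
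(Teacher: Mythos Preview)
Your proposal is correct and follows essentially the same route as the paper: both arguments split $\Delta\cR_k$ into a sum of $k$ rank-$r$ terms via the block-diagonal structure of $\Gamma_k$, rewrite $\gamma_i M z_i$ and $\overline{\gamma_i}C^*y_i$ as $(\cC_i-I)$ acting on the perturbed right-hand sides $w_{i-1}-r^A_i$ and $t_{i-1}-r^B_i$, bound each via $\check{c}_i$, and sum. Your framing of these quantities as the increments $w_i-w_{i-1}$ and $t_i-t_{i-1}$ is a helpful mnemonic but not a different mechanism; note only the small slip that the second sum carries $\gamma_i M z_i$ (not $\overline{\gamma_i}$), though this is immaterial once norms are taken.
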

\begin{proof}
Consider the following estimate 
\begin{align}\label{ADIresgapbound1}
	\|\Delta\cR_{\kmax}\|&\leq\sum\limits_{k=1}^{\kmax}\|r^B_k\|\|\gamma_kMz_k\|+\|r^A_k\|\|\overline{\gamma_k}C^*y_k\|.
\end{align}
Moreover, we can bound
\begin{align*}
\|\gamma_kMz_k\|&=\|\gamma_kM(A+\beta_k M)^{-1}(w_{k-1}-r^A_k)\|=\|(\cC_k(AM^{-1})-I_n)(w_{k-1}-r^A_k)\|\\
&\leq (c^A_k+1)(\|w_{k-1}\|+\|r^A_k\|),\\
\|\overline{\gamma_k}C^*y_k\|&=\|\overline{\gamma_k}C^*(B+\alpha_k C)^{-*}(t_{k-1}-r^B_k)\|=\|(\cC^\trp_k(C^{-1}B)-I_m)(t_{k-1}-r^B_k)\|\\
&\leq (c^B_k+1)(\|t_{k-1}\|+\|r^B_k\|),
\end{align*}
and  get
\begin{align*}
\|\Delta\cR_{\kmax}\|&\leq\sum\limits_{k=1}^{\kmax}(c^A_k+1)\|r^B_k\|(\|w_{k-1}\|+\|r^A_k\|)+(c^B_k+1)\|r^A_k\|(\|t_{k-1}\|+\|r^B_k\|)\\
&\leq\sum\limits_{k=1}^{\kmax}\check{c}_k(\|r^B_k\|\|w_{k-1}\|+\|r^A_k\|\|t_{k-1}\|+2\|r^A_k\|\|r^B_k\|) \leq\sum\limits_{k=1}^{\kmax}\frac{\varepsilon}{\kmax}=\varepsilon.
\end{align*}
if~\eqref{ADI_relax1_theo} holds. 
\end{proof}
\paragraph{Discussion and Consequences}
We already observe one striking difference compared to the Lyapunov situation: we do not get a single combination for the largest possible inner residual norms but instead infinitely many admissible combinations of $\|r^A_k\|,~\|r^B_k\|$ which satisfy \eqref{ADI_relax1_theo}.
We introduce the following notation:
 \[
\check{\varepsilon}_k:=\frac{\varepsilon}{\kmax}\quad\text{from}\quad\eqref{ADI_relax1_theo}.
\]
Due to the non-negativity of the residual norms, the bounds~\eqref{ADI_relax1_theo} dictate that admissible values are from the region bounded by
\begin{align}\label{relax_AB}
\begin{split}
0\leq\|r_k^A\|&\leq\frac{\check{\varepsilon}_k}{\check{c}_k\|t_{k-1}\|},\quad 0\leq\|r_k^B\|\leq\frac{\check{\varepsilon}_k}{\check{c}_k\|w_{k-1}\|}\quad\text{and}\quad \psi(\|r_k^A\|,\|r_k^B\|)\leq \check{\varepsilon}_k,\\
\psi(x,y)&:=\check{c}_k\left(x(\|t_{k-1}\|+y)+y(\|w_{k-1}\|+x)\right).
\end{split}
\end{align}
The largest possible inner residuals norms are located on the planar curve 
\[
0=\Psi(\|r_k^A\|,\|r_k^B\|):=\psi(\|r_k^A\|,\|r_k^B\|)-\check{\varepsilon}_k
\]
which is illustrated in Figure~\ref{fig:res_admiss}. How to select one particular combination will be discussed later.
\begin{figure}[t]
\begin{tikzpicture}[/pgf/declare function={f=(1-x)/(2/3*0.8*x+1);}]

\begin{axis}[%
width=0.7\linewidth, 
height=0.5\linewidth,
axis lines=middle,
xmin=-0.2,
xmax=1.3,
ymin=-0.2,
ymax=1.3,
xlabel={$\|r^A\|$},
ylabel={$\|r^B\|$},
restrict y to domain=-0.2:1.2,
xtick={1},
ytick={0,1},
xticklabels={$\frac{\check{\varepsilon}}{\check{c}\|t\|}$},
yticklabels={0,$\frac{\check{\varepsilon}}{\check{c}\|w\|}$},
extra x ticks={0},
extra x tick labels={0},
x tick label style={xshift={0.15em}},
axis background/.style={fill=white},
legend style={draw=black,fill=white,font=\small,legend cell align=left,legend columns=1,anchor=north west,at={(0.9,1.0)}},
legend entries={{$\Psi(\|r^A\|,\|r^B\|)=0$}, admissible tolerances, largest admissible tolerances}
]
\addplot [color=blue,thick,domain=-0.2:1.2,samples=100]
   {f};
	\addplot+[mark=none,
	draw=none,
	area legend,
        domain=0:1,
        samples=100,
        pattern=north east lines,
        pattern color=black!50]
				{f} \closedcycle;
				

\addplot [color=blue,dashed,line width=3pt,domain=0:1,samples=100,opacity=0.5]
   {f};	
	
\addplot[dotted, thin,black] coordinates {(1,0) (1,1)};
\addplot[dotted, thin,black] coordinates {(0,1) (1,1)};		
	
\end{axis}

\end{tikzpicture}%
\caption{Illustration of region of admissible inner residual norms with the boundary curve $\Psi=0$, where the thick dashed portion is the set of admissible combinations of largest tolerances. The ADI iteration indices were left out for readability. The data was generated here with $\check{c}=3$, $\check{\varepsilon}=10^{-8}$, $\|t\|=10^{-5}$, $\|w\|=2\cdot10^{-3}$.}
\label{fig:res_admiss}
\end{figure}
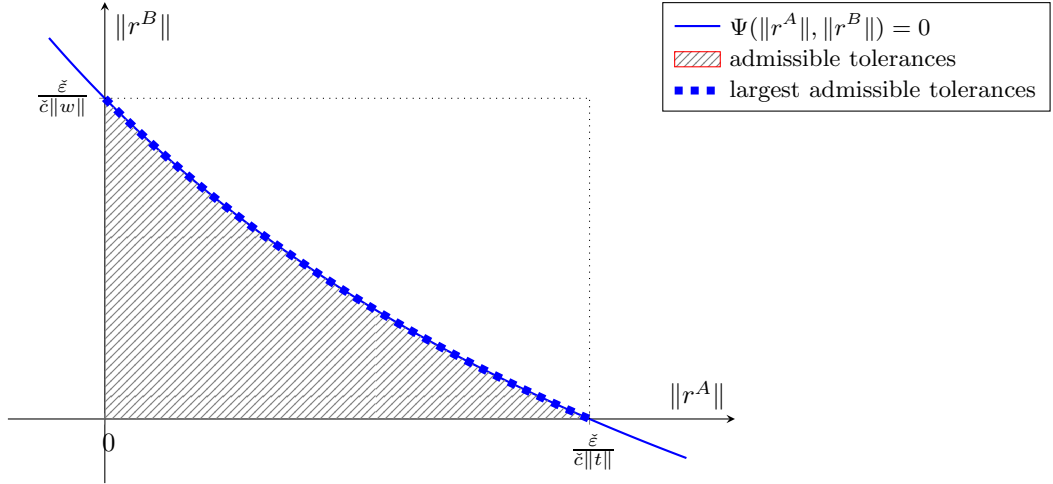
By resolving $\Psi=0$, e.g., for $\|r_k^B\|$:
\begin{align}\label{tolBfromtolA}
\|r_k^B\|=\frac{\check{\varepsilon}_k-\check{c}_k\|r_k^A\|\|t_{k-1}\|}{\check{c}_k(2\|r_k^A\|+\|w_{k-1}\|)},
\end{align}
we can infer an upper bound for one inner residual from the other. Clearly, while obeying the constraints~\eqref{relax_AB}, the smaller $\|r_k^A\|$ is chosen, the larger $\|r_k^B\|$ can be and vice versa.

If the computed Sylvester residual decreases in norm, the product of the norms of the residual factors $\|w_{k-1}\|\|t_{k-1}\|$ will decrease as well but not necessarily the norms of the individual residual factors $\|w_{k-1}\|$, $\|t_{k-1}\|$. Hence, the absolute inner residual norms do not need to increase as the outer residual norm $\|\cR^{\text{comp.}}_k\|$ decreases which forms another difference to the Lyapunov situation.
However, we observe increasing relative inner residual norms $\frac{\|r_k^A\|}{\|w_{k-1}\|}\leq\frac{\check{\varepsilon}_k}{\check{c}_k\|w_{k-1}\|\|t_{k-1}\|}$, $\frac{\|r_k^B\|}{\|t_{k-1}\|}\leq\frac{\check{\varepsilon}_k}{\check{c}_k\|w_{k-1}\|\|t_{k-1}\|}$ since $\check{c}_k$ can be bounded by a moderate constant as we will see later.
\subsection{Distance between exact and inexact Sylvester ADI with the dynamic inner solve tolerances}
We now establish a relation between the (norms of the) computed Sylvester residual matrices of both exact and inexact Sylvester-ADI, where the dynamic inner stopping criteria from Theorem~\ref{thm:ADI_theorelax} is used in the latter. 

\begin{theorem}\label{thm:ADI_resdecay}
Assume $\kmax$ steps of both exact and inexact Sylvester-ADI are applied to a Sylvester equation using the same set of shift parameters $(\alpha_k,~\beta_k)$, $j=1,\ldots,\kmax$.
Let quantities of the exact LR-ADI iteration be denoted by superscripts $^{\text{exact}}$ and let $0<\varepsilon\ll 1$ be a small threshold.
If the condition~\eqref{ADI_relax1_theo} is used in the inexact ADI iteration, then 
\begin{align*}
\|\cR^{\text{comp.}}_{\kmax}\|\leq\|\cR^{\text{exact}}_{\kmax}\|+\cO(\varepsilon).
\end{align*}   
\end{theorem}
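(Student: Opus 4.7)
The strategy is to compare the two iterations at the level of the residual product $w_k t_k^*$ rather than factor by factor, and to track the discrepancy $D_k := w_k t_k^* - w_k^{\text{exact}}(t_k^{\text{exact}})^*$. First I would derive a one-step representation for the inexact iterates: substituting $z_k=(A+\beta_k M)^{-1}(w_{k-1}-r^A_k)$ into $w_k=w_{k-1}+\gamma_k M z_k$ and using the identity $\gamma_k M(A+\beta_k M)^{-1}=\cC_k(AM^{-1})-I$ noted in~\eqref{lradi_g_wt} yields
\[
w_k = \cC_k(AM^{-1})\,w_{k-1} - (\cC_k(AM^{-1})-I)\,r^A_k,
\]
and analogously $t_k = \cC_k^*(C^{-1}B)\,t_{k-1} - (\cC_k^*(C^{-1}B)-I)\,r^B_k$. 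Since the exact iterates obey the same identities with $r^A_k=r^B_k=0$, multiplying out $w_k t_k^*$ and subtracting gives the perturbation recursion
\[
D_k = \cC_k(AM^{-1})\,D_{k-1}\,\cC_k(C^{-1}B) + E_k, \qquad D_0 = 0,
\]
where $E_k$ collects the three cross-terms that contain at least one inner residual.

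Next I would bound $\|E_k\|$. Using $\|\cC_k(AM^{-1})\|\leq c^A_k$ and $\|\cC_k(AM^{-1})-I\|\leq c^A_k+1\leq\check{c}_k$, and the analogous estimates on the $B$-side, the three cross-terms combine to
\[
\|E_k\| \leq \check{c}_k^{\,2}\bigl(\|r^A_k\|\|t_{k-1}\| + \|r^B_k\|\|w_{k-1}\| + \|r^A_k\|\|r^B_k\|\bigr).
\]
The crucial point is that the product structure of $D_k$ couples each $r^A_k$ with $t_{k-1}$ and each $r^B_k$ with $w_{k-1}$ — exactly the combination that the hypothesis~\eqref{ADI_relax1_theo} of Theorem~\ref{thm:ADI_theorelax} controls. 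Invoking it yields $\|E_k\|\leq\check{c}_k\,\varepsilon/\kmax$.

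Because each $\cC_i(AM^{-1})$ is a rational function of the same matrix $AM^{-1}$, all such factors commute among themselves (and similarly on the $B$-side), so the recursion unrolls unambiguously to
\[
D_{\kmax} = \sum_{j=1}^{\kmax}\Bigl(\prod_{i=j+1}^{\kmax}\cC_i(AM^{-1})\Bigr)\,E_j\,\Bigl(\prod_{i=j+1}^{\kmax}\cC_i(C^{-1}B)\Bigr).
\]
Since the shifts were chosen so that the exact ADI is convergent, the accumulated Cayley products remain uniformly bounded (in the convergent regime one has $\prod_{i=j+1}^{\kmax} c^A_i c^B_i\leq 1$), and combined with the boundedness of $\check{c}_k$ already observed after~\eqref{tolBfromtolA} this gives $\|D_{\kmax}\|\leq \max_k\check{c}_k\cdot\varepsilon = \cO(\varepsilon)$. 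The theorem then follows from $\|\cR^{\text{comp.}}_{\kmax}\| = \|w_{\kmax}t_{\kmax}^*\| \leq \|w_{\kmax}^{\text{exact}}(t_{\kmax}^{\text{exact}})^*\| + \|D_{\kmax}\|$.

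The main obstacle is that the individual tolerances $\|r^A_k\|,\|r^B_k\|$ need not be small in absolute terms — the hypothesis~\eqref{ADI_relax1_theo} only constrains them after multiplication by the complementary residual-factor norm $\|t_{k-1}\|$ or $\|w_{k-1}\|$, and these norms can themselves become tiny as the outer iteration progresses. A naive propagation bound for $\|w_k - w_k^{\text{exact}}\|$ alone would therefore blow up; working at the product level $D_k$ is what exposes the correct coupling and lets the hypothesis kick in cleanly.
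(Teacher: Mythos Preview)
Your proposal is correct and follows essentially the same route as the paper: derive the one-step perturbed relations $w_k=\cC_k(AM^{-1})w_{k-1}+(I-\cC_k(AM^{-1}))r^A_k$ and the analogue for $t_k$, expand $w_kt_k^*$, bound the three cross-terms using hypothesis~\eqref{ADI_relax1_theo}, and accumulate --- your explicit discrepancy $D_k$ with its recursion $D_k=\cC_k(AM^{-1})D_{k-1}\cC_k(C^{-1}B)+E_k$ is simply a cleaner packaging of the paper's ``repeating this process for $w_{k-1},t_{k-1}$'' step and unrolls to the identical sum. One small caveat (which the paper's own write-up shares): the assertion that the accumulated factors satisfy $\prod_{i>j}c_i^Ac_i^B\leq 1$ does not follow from convergence of the exact ADI alone (that concerns spectral radii, not norms); what actually gives a uniform bound on $\|\prod_{i>j}\cC_i(AM^{-1})\|\cdot\|\prod_{i>j}\cC_i(C^{-1}B)\|$ is the Crouzeix--Palencia estimate invoked afterwards under the field-of-values assumptions, and that is all the $\cO(\varepsilon)$ statement needs.
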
 
\begin{proof}
At first, following the construction of the residual factors $w_k,~t_k$ in Algorithm~\ref{alg:lradi}, we find that
\begin{align*}
w_k&=\cC_k(AM^{-1})w_{k-1}+(I-\cC_k(AM^{-1}))r_k^A\\
&=\ldots=w_k^{\text{exact}}+\sum\limits_{j=1}^k\left[\prod\limits_{i=j+1}^k\cC_i(AM^{-1})\right](I-\cC_j(AM^{-1}))r_j^A\\
t_k&=\cC_k(C^{-1}B)^*t_{k-1}+(I-\cC_k(C^{-1}B)^*)r_k^B\\
&=\ldots=t_k^{\text{exact}}+\sum\limits_{j=1}^k\left[\prod\limits_{i=j+1}^k\cC_i(C^{-1}B)^*\right](I-\cC_j(C^{-1}B)^*)r_j^B
\end{align*}   
which generalizes \cite[Lemma 3.1]{KueF20}.  
Then,
\begin{align*}
\|\cR^{\text{comp.}}_{\kmax}\|=&\|w_kt_k^\trp\|\\
\leq&\|(\cC_k(AM^{-1}))w_{k-1}t_{k-1}^\trp \cC_k(C^{-1}B)\|+\|(I-\cC_k(AM^{-1}))r_k^At_{k-1}^\trp \cC_k(C^{-1}B)\|\\
&+\|((\cC_k(AM^{-1})w_{k-1}(r_k^B)^\trp(I-\cC_k(C^{-1}B))\|\\
&+\|(I-\cC_k(AM^{-1}))r_k^A(r_k^B)^\trp(I-\cC_k(C^{-1}B))\|\\
\leq& \|(\cC_k(AM^{-1}))w_{k-1}t_{k-1}^\trp \cC_k(C^{-1}B)\|\\
&+(c_k^A+1)c_k^B\|r_k^A\|\|t_{k-1}\|+(c_k^B+1)c_k^A\|r_k^B\|\|w_{k-1}\|+(c_k^B+1)(c_k^A+1)\|r_k^A\|\|r_k^B\|,
\end{align*}
where we used the constants $c_k^A,~c_k^B$ introduced in Theorem~\ref{thm:ADI_theorelax}. Consequently, 
\begin{align*}
&(c_k^A+1)c_k^B\|r_k^A\|\|t_{k-1}\|+(c_k^B+1)c_k^A\|r_k^B\|\|w_{k-1}\|+(c_k^B+1)(c_k^A+1)\|r_k^A\|\|r_k^B\|\\
&\leq (c_k^A+1)(c_k^B+1)\left(\|r_k^A\|\|t_{k-1}\|+\|r_k^B\|\|w_{k-1}\|+2\|r_k^A\|\|r_k^B\|\right)\\
&\leq \check{c}_k^2\left(\|r_k^A\|\|t_{k-1}\|+\|r_k^B\|\|w_{k-1}\|+2\|r_k^A\|\|r_k^B\|\right)\leq \check{c}_k\frac{\varepsilon}{\kmax}.
\end{align*}
Repeating this process again for $w_{k-1}$, $t_{k-1}$ for $1\leq k\leq \kmax-1$ eventually yields
\begin{align*}
\|\cR^{\text{comp.}}_{\kmax}\|&=\|\left(\prod\limits_{k=1}^{\kmax}\cC_k(AM^{-1})\right)w_{0}t_0^\trp\left(\prod\limits_{k=1}^{\kmax}\cC_k(C^{-1}B)\right)\|^2+\sum\limits_{j=1}^{\kmax}\check{c}_k\frac{\varepsilon}{\kmax}\\
&=\|\cR^{\text{exact}}_{\kmax}\|+\frac{\varepsilon}{\kmax}\sum\limits_{k=1}^{\kmax}\check{c}_k.
\end{align*}
\end{proof}

Combining Theorems~\ref{thm:ADI_theorelax} and~\ref{thm:ADI_resdecay} yields the following conclusion.
\begin{corollary}\label{cor:trueres}
Under the same conditions as Theorem~\ref{thm:ADI_resdecay} we have 
\[
\|\cR^{\text{true}}_{\kmax}\|\leq\|\cR^{\text{comp}}_{\kmax}\|+\|\Delta\cR_{\kmax}\|\leq \|\cR^{\text{exact}}_{\kmax}\|+(1+\sum\limits_{k=1}^{\kmax}\frac{\check{c}_k}{\kmax})\varepsilon.
\]
\end{corollary}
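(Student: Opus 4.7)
The plan is essentially a one-step combination of the two preceding theorems, so there is no significant obstacle. First I would observe that by Definition~\ref{def:resgapADI} we have $\cR^{\text{true}}_{\kmax} = \cR^{\text{comp}}_{\kmax} - \Delta\cR_{\kmax}$, so the triangle inequality immediately yields the first inequality
\[
\|\cR^{\text{true}}_{\kmax}\|\leq \|\cR^{\text{comp}}_{\kmax}\|+\|\Delta\cR_{\kmax}\|.
\]

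Next I would invoke Theorem~\ref{thm:ADI_resdecay} on the first summand, which, under the assumed inner stopping criterion~\eqref{ADI_relax1_theo}, gives
\[
\|\cR^{\text{comp}}_{\kmax}\|\leq \|\cR^{\text{exact}}_{\kmax}\|+\frac{\varepsilon}{\kmax}\sum_{k=1}^{\kmax}\check{c}_k.
\]
For the second summand, Theorem~\ref{thm:ADI_theorelax} directly provides $\|\Delta\cR_{\kmax}\|\leq\varepsilon$, since the hypothesis of Theorem~\ref{thm:ADI_resdecay} assumes that~\eqref{ADI_relax1_theo} holds throughout the iteration.

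Adding the two bounds and collecting the $\varepsilon$-terms yields
\[
\|\cR^{\text{true}}_{\kmax}\|\leq \|\cR^{\text{exact}}_{\kmax}\|+\left(1+\sum_{k=1}^{\kmax}\frac{\check{c}_k}{\kmax}\right)\varepsilon,
\]
which is the claimed estimate. No separate argument or additional estimate is needed; the corollary is a direct bookkeeping of the two theorems.
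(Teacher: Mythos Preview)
Your proposal is correct and matches the paper's approach: the paper simply states that the corollary follows by ``combining Theorems~\ref{thm:ADI_theorelax} and~\ref{thm:ADI_resdecay}'' without spelling out any details, and what you wrote is exactly that combination via the triangle inequality. The only subtlety is that the explicit constant $\frac{\varepsilon}{\kmax}\sum_{k}\check{c}_k$ comes from the final line of the \emph{proof} of Theorem~\ref{thm:ADI_resdecay} rather than its $\cO(\varepsilon)$ statement, which you correctly invoke.
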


Hence, if~\eqref{ADI_relax1_theo} is used, then the (true) Sylvester residual norms in the inexact LR-ADI are a small perturbation of the
residuals of the exact method, provided the $\check{c}_k$ are bounded by moderate constants, which we will discuss next.

\subsection{Practical and Implementational Considerations}
Here we discuss some ways for the pratictal usage of the stopping criteria~\eqref{ADI_relax1_theo} in an actual implementation of the low-rank Sylvester ADI.
\subsubsection{Estimating the spectral norms}
At first we are going to bound the constants $c_k^A,~c_k^B$ and $\check{c_k}$. 
For this we insert some additional assumptions:
\begin{itemize}
\item The rational functions defining both Cayley transforms, 
	\[
	\phi_k^A(x):=\frac{x-\alpha_k}{x+\beta_k},\quad \phi_k^B(x):=\frac{x-\beta_k}{x+\alpha_k},\quad k\geq 1,
	\]
	are $\forall k\geq 1$ analytic on $\cW(AM^{-1})$ and, respectively, $\cW(C^{-1}B)$. 

	\item It holds
	\[
	q^A:=\max\limits_{z\in\cW(AM^{-1})}\vert\phi_k^A(z)\vert<1,\quad q^B:=\max\limits_{z\in\cW(C^{-1}B)}\vert\phi_k^B(z)\vert<1.
\]
\end{itemize}
Then by~\cite{CroP17},
\[
c_k^A=\|\cC_k(AM^{-1})\|\leq \psi^A q^A<\psi^A,\quad c^B_k=\|\cC_k(C^{-1}B)\|\leq  \psi^B q^B<\psi^B,
\]
where $\psi^A=1+\sqrt{2}$ ($\psi^A=1$ if $AM^{-1}$ is normal) and similarly for $\psi^B$ and $C^{-1}B$.
As a consequence, we can bound the constants as
\[
\check{c}_k\leq c:=2+\sqrt{2}.
\]
\paragraph{Practical stopping criteria}
Additionally, if we use $\dfrac{\varepsilon}{2\check c_k\kmax}$ instead of $\dfrac{\varepsilon}{k_{\max}}$ for the condition in Theorem~\ref{thm:ADI_theorelax}, than we achieve $\|\cR^{\text{comp}}_{\kmax}\|\leq\|\cR^{\text{exact}}_{\kmax}\|+\frac{\varepsilon}{2}$ and  $\|\cR^{\text{true}}_{\kmax}\|\leq \|\cR^{\text{exact}}_{\kmax}\|+\varepsilon$ in Corollary~\ref{cor:trueres}. 
Replacing $\check{c}_k$ by the bound $c$ leads to
\begin{align}\label{ADI_relax1_prak}
	\|r^A_k\|\|t_{k-1}\|+\|r^B_k\|\|w_{k-1}\|+2\|r^B_k\|\|r^A_k\| \leq \xi\frac{\varepsilon}{2c^2\kmax}=:\hat\varepsilon_k^{(1)}
\end{align} 
as practical realization of~\eqref{ADI_relax1_theo}. Here, $0<\xi\leq 1$ is a save-guard constant for situations when the above assumptions are midly violated.
Note that similar small save-guard constants are common in inexact (rational) Krylov methods~\cite{SimEld2002,Sim05,KueF20}. In most of our experiments, $\xi=1$ was sufficient.
\subsubsection{Incorporation of the inner residual norms and residual gaps}
The proposed stopping criteria require the norms residuals of the shifted linear systems which can often be directly obtained from the employed preconditioned Krylov subspace methods. This holds especially in the case of of right preconditioning, if left or two-sided preconditioning is used, they might have to be computed directly or estimated differently. 

\paragraph{Back-Looking}
The previous inner residual norms can be used to further refine the dynamic stopping strategy. In the bound~\eqref{ADIresgapbound1} we can at step $k$ try to achieve
\begin{align*}
	\|\Delta\cR_{k}\|&\leq\|\Delta\cR_{k-1}\|+\|r^B_k\|\|\gamma_kMz_k\|+\|r^A_k\|\|\overline{\gamma_k}C^*y_k\|\leq \frac{k\varepsilon}{\kmax}.
\end{align*}
which leads to
\begin{align}\label{ADI_relax2_theo} 
\check{c}_k(\|r^A_k\|\|t_{k-1}\|+\|r^B_k\|\|w_{k-1}\|+2\|r^B_k\|\|r^A_k\|)+\|\Delta\cR_{k-1}\|\leq \frac{k\varepsilon}{\kmax}
\end{align}
As in~\cite{KueF20}, the reasoning behind this approach is to look back at all previous inner residuals and also incorporates the previous residual gap $\|\Delta\cR_{k-1}\|$. Hence, we will refer to this strategy as 'back-looking'. This might allow to use at step $k$ slightly larger inner tolerances if, at some of the earlier steps $i\leq k-1$, smaller inner residuals than requested were achieved by the inner solvers.

The back-looking strategy~\eqref{ADI_relax2_theo} requires the previous residual gap $\|\Delta\cR_{k-1}\|=\|\eta^A_{k-1}+\eta^B_{k-1}\|$ which might be expensive to compute and, moreover, it would require storing all previous inner residuals. 
Here, we simply use for $k\geq 2$ the approximations
\begin{align}\label{upd_resgap}
\begin{split}
\|\Delta\cR_{k-1}\|&\leq\|\eta^A_{k-1}\|+\|\eta^B_{k-1}\|\leq u_{k-1}+v_{k-1},\\
u_{k-1}&:=u_{k-2}+|\gamma_{k-1}|\|Mz_{k-1}\|\|r^B_{k-1}\|,\quad u_0:=0,\\
v_{k-1}&:=v_{k-2}+|\gamma_{k-1}|\|C^\trp y_{k-1}\|\|r^A_{k-1}\|,\quad v_0:=0.
\end{split}
\end{align} 
The matrix-vector products $Mz_{k-1}$ and $C^\trp y_{k-1}$ can be reused from step~\ref{lradi_resup} of Algorithm~\ref{alg:lradi}.

As a practical realization of~\eqref{ADI_relax2_theo} we propose 
\begin{align}\label{ADI_relax2_prak}
	\|r^A_k\|\|t_{k-1}\|+\|r^B_k\|\|w_{k-1}\|+2\|r^B_k\|\|r^A_k\| \leq \frac{1}{c}\left|\xi\frac{k\varepsilon}{2c\kmax}-u_{k-1}-v_{k-1}\right|=:\hat\varepsilon_k^{(2)}
\end{align} 
(Here, we again replaced $\check{c}_k$ by $c$, divided the right hand side of~\eqref{ADI_relax2_theo}  by $2c$, and introduced the save-guard constant $\xi$).
\subsubsection{Selecting one particular combination of solve tolerances}
From the infinitely many possibilities for the inner tolerances, we have to select one combination in an actual implementation.
At first, for reasons of feasibility, we may restrict the inner accuracies to some minimal and maximal levels via
\begin{align}\label{bound_tols}
0<\delta^A_{\min}\leq \delta_k^A\leq \delta^A_{\max},\quad 0<\delta^B_{\min}\leq \delta_k^B\leq \delta^B_{\max}.
\end{align}
Then, one simple selection for $\delta_k^A$ could be to pick it somewhere from the middle of the boundary curve of~\eqref{relax_AB} and compute $\delta_k^B$ via~\eqref{tolBfromtolA}, as illustrated in Figure~\ref{fig:res_admiss2}. A strategy that worked well in our experiments is to set 
\begin{align}\label{simple_tols}
\begin{split}
\delta_k^A&=\max\left(\half\left(\min(\delta_{\max}^A,\frac{\hat\varepsilon_k}{\|t_{k-1}\|})-\delta_{\min}^A\right),\delta_{\min}^A\right)\\
\delta_k^B&=\max\left(\min\left(\frac{\hat\varepsilon_k-\delta_k^A\|t_{k-1}\|}{2\delta_k^A+\|w_{k-1}\|},\delta_{\max}^B\right),\delta_{\min}^B\right),
\end{split}
\end{align}
where $\hat\varepsilon_k$ can be $\hat\varepsilon_k^{(1)}$ or $\hat\varepsilon_k^{(2)}$, depending on whether~\eqref{ADI_relax1_prak} or \eqref{ADI_relax2_prak} is used.
\begin{figure}
\begin{tikzpicture}[/pgf/declare function={f=(1-x)/(2/3*0.8*x+1);},/pgf/declare function={g=0.2;}]

\begin{axis}[%
width=0.7\linewidth, 
height=0.5\linewidth,
axis lines=middle,
xmin=-0.2,
xmax=1.3,
ymin=-0.2,
ymax=1.3,
xlabel={$\|r^A\|$},
ylabel={$\|r^B\|$},
restrict y to domain=-0.2:1.2,
xtick={1},
ytick={0,1},
xticklabels={$\frac{\check{\varepsilon}}{\check{c}\|t\|}$},
yticklabels={0,$\frac{\check{\varepsilon}}{\check{c}\|w\|}$},
extra x ticks={0},
extra x tick labels={0},
x tick label style={xshift={0.15em}},
axis background/.style={fill=white},
legend style={draw=black,fill=white,font=\small,legend cell align=left,legend columns=1,anchor=north west,at={(0.6,1.0)}},
legend entries={{$\Psi(\|r^A\|,\|r^B\|)=0$}, admissible tolerances, admissible tolerances on $\Psi=0$}
]
\addplot [color=blue,thick,domain=-0.2:1.2,samples=100, name path=A]
   {f};

\addplot[dashdotted, thick,red,forget plot,name path=B,domain=0:1,samples=2] {g}; 

\addplot[mark=none,draw=none,area legend,pattern=north east lines,pattern color=black!50] fill between[of=A and B,soft clip={domain=0.1:0.7229}];
	%
\addplot [color=blue,dashed,line width=3pt,domain=0.1:0.7229,samples=100,opacity=0.5]
   {f};	
	\label{legend1}
	
\addplot[dotted, thin,black] coordinates {(1,0) (1,1)};
\addplot[dotted, thin,black] coordinates {(0,1) (1,1)};		
\addplot[dashdotted, thick,red] coordinates {(0.1,0) (0.1,1)};

\addplot[mark=square*,only marks,mark size=3.0pt,fill=yellow, mark options={solid},forget plot] coordinates {(0.4,0.49)};
	%
\end{axis}

\end{tikzpicture}%
\caption{Extension of Figure~~\ref{fig:res_admiss} illustrating the region of admissible inner residual norms with set of admissible combinations of largest tolerances (thick dashed line). The square marks on possible combination from this set. The straight dashed-dotted lines indicate minimal bounds $\delta^A_{\min},~\delta^B_{\min}$ for the inner residuals norms.}
\label{fig:res_admiss2}
\end{figure}
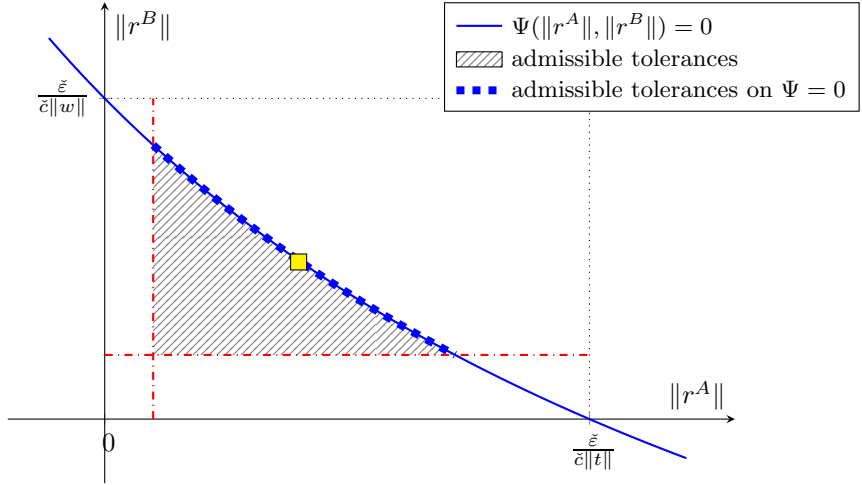

Since any point of the curve $\Psi=0$ is admissible, we may move this point so that smaller inner residuals are favoured for those linear system that are easier and/or less costly to solve and, thus, allow larger residuals for the other linear system. For example, if the linear system with $A+\beta_k M$ cab ve easier and faster solved than that with $B+\alpha_k C$, we set
\begin{align}\label{one_side}
\delta_k^A=\delta^A_{\min}\quad\text{and}\quad \delta_k^B\quad\text{via}\quad\eqref{simple_tols}.
\end{align}
 In the reverse situation, analogously select $\delta_k^B=\delta^B_{\min}$ and largest admissible $\delta_k^A$. 
As indicators on how difficult or costly the iterative solution of a linear system is, we could look at, e.g., the size of the matrix, the cost of the matrix vector products (sparsity density), the condition number of the matrix, or a mixture thereof.  
\paragraph{Combination of direct and iterative linear solves}
A similar situation arises when the linear systems of one sequence, e.g. those with $A+\beta_k M$, can be solved by sparse direct solvers, and the other linear systems, e.g. those with $B+\alpha_k C$, require iterative methods, or vice-versa. Direct solvers usually achieve a high accuracy and do not need stopping criteria, so that selecting a residual threshold is only required in the other sequence. For example, if the systems with $A+\beta_k M$ are solved directly and we, ideally, have $\|r^A\|\approx 0$, than the proposed practical stopping criteria~\eqref{ADI_relax1_prak},\eqref{ADI_relax2_prak} simplify to
	\[
\|r_k^B\|\lesssim \frac{\hat\varepsilon_k}{\|w_{k-1}\|}.
	\]
Note that this bears similarity with the simplified stopping criteria for the inexact LR-ADI iteration for Lyapunov equations~\cite{KueF20}. There, the bounds are  of the form $\|r_k\|\lesssim\frac{\hat\varepsilon_k}{\sqrt{\|\cR^{\text{comp}}_{k-1}\|}}$, where $\|w_{k-1}\|=\|\cR^{\text{comp}}_{k-1}\|^\half=$. For the Sylvester ADI and $r=1$, the computed Sylvester residual norms are $\|\cR^{\text{comp}}_{k-1}\|=\|w_{k-1}\|\|t_{k-1}\|$, so that $\|w_{k-1}\|=\|\cR^{\text{comp}}_{k-1}\|^p$ for some $0<p<1$.
\subsubsection{Further implementation aspects}
\paragraph{Choice of the inner iterative solver and preconditioning}
The motivation behind low-rank solvers for large matrix equations was to compute approximate solution in a memory efficient way. This should be maintained also in inexact low-rank method and we, therefore, strongly advocate the use of short-recurrence Krylov methods for solving the (unsymmetric) inner linear systems.
Working with an (non-restarted) long-recurrence methods such as GMRES for solving unsymmetric inner linear systems requires storing the full Krylov basis and, hence, might hinder working in a memory efficient way (especially if the memory requirements for GMRES exceed those for the low-rank factors). Of course, having a very  effective preconditioner
would limit the memory requirements.

If $\rank{fg^\trp}=r>1$, every inner linear system has $r$ right hand sides. One could then use special block Krylov methods (see, e.g.,~\cite{OLEARY80,Sood15}). In our experiment this had no advantage over simply employing the single vector methods to every column  
$w_{k-1}(:,\ell)$,  $t_{k-1}(:,\ell)$, $\ell=1,\ldots,r$. As stopping criteria we than simply used $\|r_k(:,\ell)\|\leq\delta_k/r$.

The proposed stopping criteria require the norms of the residuals $r_k$ of the underlying inner linear systems. Naturally, one would use preconditioners within the Krylov methods to improve their performance. If left or two-sided preconditioning is used, the preconditioned Krylov method will internally work with the preconditioned residuals which might be different from the true inner residuals. Hence, we therefore used mostly right preconditioning which does not suffer from this issue. In the other cases we computed the true inner residual norms after termination of the Krylov method and ensured than $\|r_k\|\leq \delta_k$ was met. 
\paragraph{Complex Shift Parameters}
For Sylvester equations defined by real but non-symmetric coefficient matrices, the sets of shifts $\lbrace\alpha_j\rbrace_{j=1}^k,~\lbrace\beta_j\rbrace_{j=1}^k$ can 
include pairs of complex conjugated shifts. In order to minimize the amount of complex arithmetic operations and to generate real low-rank solution factors $Z_k,\Gamma_k,Y_k$, 
the results in~\cite{BenK14,Kue16} can be used. The main idea is to perform double iteration step when a complex pair of $\alpha$-shifts meets a complex pair of $\beta$-shifts or two real $\beta$-shifts or vice-versa (all other situations can be avoided by a basic reordering of the shift sequences). This way, only one complex linear system needs to be solved per complex conjugate pair of shifts, but the arising variant of the low-rank Sylvester ADI involves rather cumbersome formulas. Therefore, we stick here to the possible complex formulation of the method. Note that in most of the upcoming experiments the used shifts were entirely real.  
As in the inexact low-rank Lyapunov ADI iteration, using the real formulation of the Sylvester ADI only involves some minor adjustments for the estimate~\eqref{upd_resgap} when the back-looking strategy~\eqref{ADI_relax2_prak} is used and a double-step occurs.
\paragraph{Related Matrix Equations}
Next to generalized Lyapunov equation ($B=A^\trp,~C=M^\trp,~g=f$) also cross-Gramian Sylvester equations ($B=A,~C=M$) and symmetric Stein matrix equations ($B=M^\trp$, $C=-A^\trp$, $g=f$) are special cases of~\eqref{gsylv}. Hence, with the appropriate adaptations Algorithm~\ref{alg:lradi} can be employed as well (see~\cite{BenK14, Kue16}) including the proposed dynamic stopping criteria of this work. 
\section{Numerical experiments}\label{sec:num}
The following experiments were carried out in \matlab~2023a on a \intel\coretwo~i7-7500U CPU @ 2.7GHz with 16 GB RAM\footnote{The codes for the experiments will be made available online after publication of the article.}.
We wish to obtain an approximate solution such that the scaled Sylvester residual norm satisfies
\begin{align*}
 \fR:=\|\cR^{\text{true}}\|/\|fg^\trp\|\leq \tilde\varepsilon,\quad 0<\tilde\varepsilon\ll 1.
\end{align*}
In all the upcoming experiments $\tilde\varepsilon=10^{-8}$ is used.

As Krylov subspace solvers for the inner linear systems we use BiCGstab for unsymmetric coefficients $A+\beta_kM$, $B+\alpha_k C$ and MINRES in the symmetric case. 
Note that if $\beta_k\in\C$ then $A+\beta_kM$ is unsymmetric even if $A,~M$ are symmetric and, thus, requiring an Krylov method for unsymmetric linear systems.
Sparse-direct solves are carried out by the \matlab~\textit{backslash}-routine. 

Shift parameters for the Sylvester-ADI iteration are either generated by the heuristic approach from~\cite{BenLT09} (using $10$  Ritz values for each of $A,~B$ and $20$ inverse Ritz values of $A^{-1},~B^{-1}$) or the analytic approach by Sabino~\cite[Algorithm~2.1]{Sab07} (using approximations of the extremal eigenvalues from both spectra obtained with the \matlab routine \texttt{eigs}). 

The coefficients in some of our experiments come from finite-difference discretizations of convection-diffusion operators 
\begin{align}\label{ex:convdiff}
\cL(u)=-\triangle u+\omega^\trp\nabla u\quad\text{on}\quad (0,~1)^3,
\end{align}
with different $\omega\in\R^3$ and a uniform grid with $n_0$ points in each spatial dimension were used, leading to matrices of size $n_0^3\times n_0^3$. The right hand side factors $f,g$ are drawn from a normal distribution and are rescaled so that $\|f\|=\|g\|$. Table~\ref{tab:examples} gives an overview of the examples and the settings for preconditioners and ADI shift selection methods.

\begin{table}[t]
  \centering
  \caption{Examples used in the experiments with matrix properties, settings for preconditioners and shift selection.
Here, iLU$(X,\nu)$ and iC$(X,\nu)$ refer to incomplete LU and, respectively, Cholesky factorization of the matrix $X$ with drop
tolerance $\nu$. The last column indicates which ADI shift selection strategy (analytic approach by Sabino or heuristic method) is used. }
\setlength{\tabcolsep}{0.5em}
\renewcommand{\arraystretch}{1.2}
  \begin{tabularx}{\textwidth}{l|l|X|l|l|l|l}
    Ex. & $n$, $m$&coefficients&$r$&sym.&prec.&shifts\\
    \hline
    1&125000&$A$: $\omega=0$, $M=I_n$&5&yes&iC($-A$,0.1)&Sab.\\
		&27000&$B$: $\omega=0$, $C=I_m$&5&yes&iC($-B$,0.1)&Sab.\\
\hline
    2&500000&$A$: $\omega=\smb x\sin(x),y\cos(y),\mathrm{e}^{z^2-1}\sme^T$, $M=I_n$&3&no&iLU($A$,0.1)&heur.\\
		&27000&$B$: $\omega=\smb zy(x^2-1),\frac{1}{y^2+1},\mathrm{e}^z\sme^T$, $C=I_m$&3&no&iLU($B^\trp$,0.1)&heur.\\
    \hline
	3&125000&$A$ from Example 1&5&yes&iC($-A$,0.1)&heur.\\
		&22500&$B$: two-dim. version of \eqref{ex:convdiff} with $\omega=0$, $C=I_m$&5&yes&--&heur.\\
    \hline
	4&106641&$A,~M$ from simplifiedMachineToolFineSA1~\cite{dataSauNVetal23}&2&yes&iC($-A-\beta_kM$,0.1)&heur.\\
		&35408&$B,~C$ from simplifiedMachineToolFineSA2~\cite{dataSauNVetal23}&2&yes&iC($-B-\alpha_kC$,0.1)&heur.\\
  \end{tabularx}\label{tab:examples}
\end{table}

Example~2 is set up similar to an example in~\cite{KresLMP21}. The matrix $B$ in Example~3 comes form a two-dimensional finite difference discretization, and the arising linear systems with can be efficiently solved by direct methods. Hence, only the inner tolerances $\delta_k^A$ need to be chosen. The matrices of the generalized Sylvester equation in Example~4 come from a finite element discretization of the heat transfer across a machine tool~\cite{SauVNetal20}. Here, updating the preconditioners in every ADI iteration step is required to the achieve a reasonable performance of the inner solver MINRES. In all other experiments fixed preconditioners were sufficient.

We will monitor the performance of Sylvester-ADI with fixed tolerances, using $\delta_k^{A,B}=\tilde\varepsilon/20$, and with the proposed dynamically chosen inner solve tolerances. If not stated otherwise, $\delta_{\min}=\tilde\varepsilon/20$, $\delta_{\max}=0.1$ are set as minimal and maximal linear solve tolerances and 
 $j_{\max}=50$, $\xi=1$ are used. Only example~4 required slightly stricter settings: $\delta_{\min}=\tilde\varepsilon/100$, $j_{\max}=100$, $\xi=0.1$.

The following settings for the dynamic stopping criteria are tested:
\begin{itemize}
	\item dynamic, no BL, mid: strategy~\eqref{ADI_relax1_prak} without back-looking, combination~\eqref{simple_tols} of $\delta_k^A,\delta_k^B$ from middle of the admissible set.
	\item dynamic, BL, mid: like above, but with back-looking~\eqref{ADI_relax2_prak}.
		\item dynamic, no BL, $B$: strategy~\eqref{ADI_relax1_prak} without back-looking, inner iterations with smaller systems (defined by $B^\trp+\overline{\alpha} C^\trp$) are preferred via \eqref{one_side}. 
	\item dynamic, BL, $B$: like above, but with back-looking~\eqref{ADI_relax2_prak}.
\end{itemize}
Inside Algorithm~\ref{alg:lradi}, the scaled computed Sylvester residual norm $\|\cR^{\text{comp}}\|/\|fg^\trp\|$ is used for the outer stopping criteria. After termination, we also estimate the (scaled) norm of the true Sylvester residual matrix $\|\cR^{\text{true}}\|$ by using the \texttt{eigs} routine to get an approximation of $\lambda_{\max}\left((\cR^{\text{true}})^\trp\cR^{\text{true}}\right)$. The results are summarized in Table~\ref{tab:ex_results}. 

\begin{table}[t]
  \centering
  \caption{Experimental results. The columns denote the used inner stopping criterion (fixed or dynamic versions), the
number of required outer iterations it\textsuperscript{out}, the column dimension of the low-rank solution factors (dim), the final obtain scaled residual norm $\fR_{k}$, the total number of inner iteration steps $\sum$it$^{\text{in},A}$, $\sum$it$^{\text{in},B}$ for both linear systems, the computing times in seconds, and 
the obtained savings in the computing time compared to inexact LR-ADI with fixed inner tolerances. 
}\label{tab:ex_results}
  \setlength{\tabcolsep}{0.5em}
  \begin{tabularx}{\textwidth}{r|X|r|r|r|rr|r|r}
Ex.&settings inner tol&it\textsuperscript{out}&dim&$\fR_{k}$&$\sum$it$^{\text{in},A}$&$\sum$it$^{\text{in},B}$&time&save\\
\hline
\multirow{6}{*}{%
    \begin{minipage}{1em}
		1
    \end{minipage}
    }   	
&direct solves&29&145&1.9e-09&--&--&114.6&\\ 
&fixed&29&145&1.9e-09&581&738&34.3&\\ 
&dynamic, no BL, mid&29&145&1.9e-09&471&462&26.3&23.32\%\\ 
&dynamic, BL, mid&29&145&2.0e-09&436&437&25.8&24.78\%\\ 
&dynamic, no BL, $B$&29&145&1.9e-09&450&597&27.9&18.66\%\\ 
&dynamic, BL, $B$&29&145&1.9e-09&424&597&28.2&17.78\%\\ 
\hline
\multirow{3}{*}{%
    \begin{minipage}{1em}
		2
    \end{minipage}
    }   	
&fixed&25&75&4.1e-10&836&428&234.6&\\ 
&dynamic, BL, mid&25&75&4.1e-10&602&315&165.4&29.92\%\\ 
&dynamic, BL, $B$&25&75&4.2e-10&596&350&162.8&30.61\%\\ 
\hline
\multirow{3}{*}{%
    \begin{minipage}{1em}
		3
    \end{minipage}
    }   	
&direct solves&20&100&5.3e-09&--&--&77.2&\\ 
&fixed&20&100&5.3e-09&638&--&24.2&\\ 
&dynamic, BL&20&100&5.3e-09&430&--&18.0&25.62\%\\
\hline
\multirow{4}{*}{%
    \begin{minipage}{1em}
		4
    \end{minipage}
    }   	
&direct solves&70&140&6.4e-09&--&--&184.5&\\ 
&fixed&70&140&6.4e-09&3330&2827&194.4&\\ 
&dynamic, BL, mid&70&140&7.0e-09&1524&1617&109.3&43.78\%\\ 
&dynamic, BL, $B$&75&150&9.1e-09&1491&2388&129.5&33.38\%\\ 
\end{tabularx}
\end{table}

For Example~1 we tested the inexact LR-Sylvester-ADI iteration with all of the above dynamic stopping criteria as well as with fixed inner tolerances and also used the exact ADI (with direct inner solvers). From the data collected in Table~\ref{tab:ex_results} we observe, at first, that with an appropriate choice for the 
inner tolerances, the inexact Sylvester-ADI needs the same number of outer steps and achieves similar final Sylvester residuals than the exact counterpart, but requires significantly less computing time. This is also the case for most of the other examples. Secondly, using the dynamic stopping criteria leads overall to smaller numbers of the required inner iteration steps (column $\sum$it$^{\text{in},A}$, $\sum$it$^{\text{in},B}$ in the table) compared to fixed inner tolerances. This leads to reduced computing times with savings between approximately 17 and 25 percent for Example~1.

Now comparing the various different versions of the dynamic stopping criteria, we observe that the plain version~\eqref{ADI_relax1_prak} requires slightly more inner iterations than the version with back-looking~\eqref{ADI_relax2_prak}. Since using~\eqref{ADI_relax2_prak} comes with almost no additional costs, we therefore always use back-looking for the remaining examples. The strategies which aim at more inner iteration steps with the smaller linear systems indeed achieve this goal: the numbers $\sum$it$^{\text{in},A}$ are slightly decreased while $\sum$it$^{\text{in},B}$ are slightly increased. However, this does for Example~1 not lead to a reduction in the computing time.. Some more fine-tuning regarding the selection of a combination $\delta_k^A,~\delta_k^B$ might be needed here. Figure~\ref{fig:ex1_res} shows that, when the scaled computed Sylvester residual norm $\fR_k^{\text{comp.}}$ decreases in the course of the Sylvester-ADI iteration, the dynamic stopping criteria lead to increasing inner residual norms $\|r_k^A\|$, $\|r_k^B\|$. Note that since the  $\fR_k^{\text{comp.}}$ was visually indistinguishable for all the tested variants, only one curve is shown in~Figure~\ref{fig:ex1_res}. In Figure~\ref{fig:ex1_innit} the cumulative sum of the inner iteration steps is illustrated for different inner stopping criteria. We clearly see that the dynamic criteria lead to a significantly reduced slope compared to fixed tolerances, which is reduced slightly further when back-looking~\eqref{ADI_relax2_prak} is equipped.  

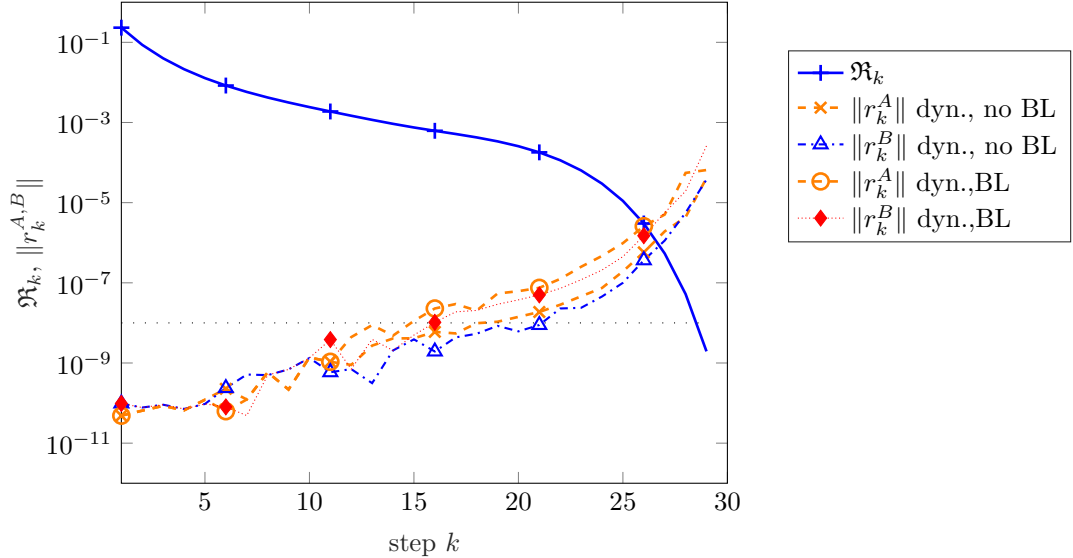
\begin{figure}[t]
%
%
\definecolor{mycolor1}{rgb}{1.00000,0.00000,1.00000}%
\begin{tikzpicture}

\begin{axis}[%
width=0.6\linewidth,
height=0.5\linewidth,
cycle list name=res,
xmin=1,
xmax=30,
xlabel style={font=\color{white!15!black}},
xlabel={step $k$},
ylabel={$\fR_k$, $\|r_k^{A,B}\|$},
ymode=log,
ymin=1e-12,
ymax=1,
yminorticks=true,
ylabel style={font=\color{white!15!black}},
axis background/.style={fill=white},
legend style={at={(1.1,0.9)}, anchor=north west, legend cell align=left, align=left, draw=white!15!black},
legend entries= {$\fR_k$, {$\|r_k^A\|$ dyn., no BL}, {$\|r_k^B\|$ dyn., no BL},{$\|r_k^A\|$ dyn.,BL},{$\|r_k^B\|$ dyn.,BL}}
]
\addplot 
  table[row sep=crcr]{%
1	0.231894311509842\\
2	0.0863407980086386\\
3	0.0398920280742611\\
4	0.0213504919440937\\
5	0.0127984607715732\\
6	0.00834948611762347\\
7	0.00578839797341375\\
8	0.00420043170587674\\
9	0.00315274824933197\\
10	0.00241996400099272\\
11	0.00188212263318951\\
12	0.0014746277118499\\
13	0.00116241038573174\\
14	0.000925697858979264\\
15	0.000750786918289523\\
16	0.000620520006377795\\
17	0.000516163162677393\\
18	0.000424397043884384\\
19	0.00033818668291699\\
20	0.000255882639188282\\
21	0.000179790295032347\\
22	0.000114297144000769\\
23	6.36523549256432e-05\\
24	2.97790026967087e-05\\
25	1.10644049680798e-05\\
26	3.01956979945373e-06\\
27	5.39602513695608e-07\\
28	5.2414288304444e-08\\
29	1.95276558227478e-09\\
};

\addplot 
  table[row sep=crcr]{%
1	4.85051246011334e-11\\
2	6.47384112875604e-11\\
3	8.53748642691232e-11\\
4	6.59101662796204e-11\\
5	1.21218687079376e-10\\
6	2.27155808617969e-10\\
7	1.23060871459558e-10\\
8	5.85894322064088e-10\\
9	2.179346954662e-10\\
10	1.37531307798068e-09\\
11	1.07165689890225e-09\\
12	8.94885969705691e-10\\
13	2.71158188894126e-09\\
14	4.09185800421101e-09\\
15	4.13987510388844e-09\\
16	5.98221793169549e-09\\
17	5.46369566661057e-09\\
18	9.82501572664878e-09\\
19	1.06882630694862e-08\\
20	1.41511089697922e-08\\
21	1.8735037711794e-08\\
22	2.83749100206088e-08\\
23	4.60666019861035e-08\\
24	7.44926169987816e-08\\
25	1.92604946087505e-07\\
26	5.7615640936168e-07\\
27	1.91815904271328e-06\\
28	4.36807818437804e-06\\
29	4.11422513976415e-05\\
};

\addplot 
  table[row sep=crcr]{%
1	9.76192945828142e-11\\
2	7.74290958601504e-11\\
3	9.09824556143526e-11\\
4	7.14241527296628e-11\\
5	9.50982857833864e-11\\
6	2.36494805484366e-10\\
7	5.15209414574052e-10\\
8	4.97927806475785e-10\\
9	6.91237147304654e-10\\
10	1.33211254713407e-09\\
11	5.88184153722217e-10\\
12	7.04145905018673e-10\\
13	3.13908692401351e-10\\
14	2.05472517187893e-09\\
15	3.88763578567337e-09\\
16	1.93608033293828e-09\\
17	4.35411213767533e-09\\
18	5.36578717894911e-09\\
19	8.43168114849999e-09\\
20	6.15389883664158e-09\\
21	8.87247396150443e-09\\
22	2.28996100323389e-08\\
23	2.3811814254679e-08\\
24	4.55091659074949e-08\\
25	9.99314348756048e-08\\
26	3.64911771734583e-07\\
27	1.13216987256009e-06\\
28	5.60012815690102e-06\\
29	3.77533966156465e-05\\
};

\addplot 
  table[row sep=crcr]{%
1	4.85051246011334e-11\\
2	6.47384112875604e-11\\
3	8.53748642691232e-11\\
4	6.59101662796204e-11\\
5	1.21218687079376e-10\\
6	6.25926837973749e-11\\
7	1.23060864903884e-10\\
8	5.85894323633366e-10\\
9	2.17934700291632e-10\\
10	1.37531308223084e-09\\
11	1.07165691805532e-09\\
12	4.42126736562198e-09\\
13	8.8184947999937e-09\\
14	4.81837212024616e-09\\
15	1.1217538561537e-08\\
16	2.28970772377598e-08\\
17	2.9560250523353e-08\\
18	2.02881484057242e-08\\
19	5.40429181782971e-08\\
20	6.18909778719258e-08\\
21	7.4999431486013e-08\\
22	1.25202024948457e-07\\
23	2.51434925479816e-07\\
24	4.68553784420697e-07\\
25	9.77389205692627e-07\\
26	2.58459625012484e-06\\
27	5.04860021492855e-06\\
28	5.52571728287457e-05\\
29	6.51173024842941e-05\\
};

\addplot 
  table[row sep=crcr]{%
1	9.76192945828142e-11\\
2	7.74290958601504e-11\\
3	9.09824556143526e-11\\
4	7.14241527296628e-11\\
5	9.50982857833864e-11\\
6	8.04075165515394e-11\\
7	4.88050901009712e-11\\
8	4.97927366300287e-10\\
9	6.91236879159094e-10\\
10	1.3321124878972e-09\\
11	3.83865375411707e-09\\
12	7.04145324344549e-10\\
13	3.87802830408005e-09\\
14	2.05472511372293e-09\\
15	4.97413568825784e-09\\
16	1.03263994830749e-08\\
17	1.88671530378068e-08\\
18	2.0494088742831e-08\\
19	2.91397334642005e-08\\
20	3.75722349422693e-08\\
21	4.99497800793867e-08\\
22	7.25505908826546e-08\\
23	1.19541226732605e-07\\
24	2.08340177355875e-07\\
25	4.50195640484903e-07\\
26	1.49630354819933e-06\\
27	5.62605892476338e-06\\
28	1.94004075815875e-05\\
29	0.0002609282421792\\
};
\addplot [color=black,loosely dotted,forget plot]
  table[row sep=crcr]{1	1e-08\\
29	1e-08\\
};
\end{axis}
\end{tikzpicture}%
\caption{Residual norms for Example~1: Scaled computed residual norms $\fR_k^{\text{comp}}$ and inner residual norms $\|r_k^A\|$, $\|r_k^B\|$ against the outer iteration number for different dynamic stopping criteria. Only one curve for $\fR_k^{\text{comp}}$ is shown.}
\label{fig:ex1_res}
\end{figure}

\begin{figure}[t]
%
%
\definecolor{mycolor1}{rgb}{1.00000,0.00000,1.00000}%
\begin{tikzpicture}

\begin{axis}[%
width=0.6\linewidth,
height=0.5\linewidth,
cycle list name=res,
xmin=1,
xmax=30,
xlabel style={font=\color{white!15!black}},
xlabel={step},
ymin=0,
ymax=800,
ylabel style={font=\color{white!15!black}},
ylabel={sum(inner iters)},
axis background/.style={fill=white},
legend style={at={(1.1,0.9)}, anchor=north west, legend cell align=left, align=left, draw=white!15!black},
legend entries= {{it$^{\text{in},A}$, fixed}, {it$^{\text{in},B}$, fixed}, {it$^{\text{in},A}$, dyn.}, 
{it$^{\text{in},B}$, dyn.},{it$^{\text{in},A}$, dyn., BL}, {it$^{\text{in},B}$, dyn., BL}}]
\addplot 
  table[row sep=crcr]{%
1	33\\
2	63\\
3	90\\
4	113\\
5	133\\
6	150\\
7	164\\
8	176\\
9	186\\
10	195\\
11	204\\
12	215\\
13	228\\
14	242\\
15	258\\
16	276\\
17	296\\
18	317\\
19	340\\
20	365\\
21	391\\
22	418\\
23	445\\
24	472\\
25	498\\
26	523\\
27	545\\
28	564\\
29	581\\
};

\addplot 
  table[row sep=crcr]{%
1	33\\
2	66\\
3	98\\
4	129\\
5	158\\
6	185\\
7	209\\
8	231\\
9	250\\
10	267\\
11	281\\
12	294\\
13	306\\
14	319\\
15	334\\
16	351\\
17	371\\
18	393\\
19	418\\
20	445\\
21	475\\
22	507\\
23	540\\
24	574\\
25	608\\
26	642\\
27	675\\
28	707\\
29	738\\
};

\addplot 
  table[row sep=crcr]{%
1	27\\
2	53\\
3	77\\
4	99\\
5	118\\
6	134\\
7	148\\
8	159\\
9	169\\
10	177\\
11	186\\
12	197\\
13	209\\
14	223\\
15	238\\
16	255\\
17	273\\
18	293\\
19	314\\
20	336\\
21	359\\
22	382\\
23	403\\
24	423\\
25	441\\
26	454\\
27	462\\
28	468\\
29	471\\
};

\addplot 
  table[row sep=crcr]{%
1	27\\
2	53\\
3	78\\
4	102\\
5	124\\
6	143\\
7	159\\
8	173\\
9	185\\
10	195\\
11	204\\
12	212\\
13	220\\
14	228\\
15	238\\
16	249\\
17	261\\
18	275\\
19	290\\
20	306\\
21	324\\
22	343\\
23	363\\
24	383\\
25	402\\
26	420\\
27	436\\
28	450\\
29	462\\
};

\addplot 
  table[row sep=crcr]{%
1	27\\
2	53\\
3	77\\
4	99\\
5	118\\
6	135\\
7	149\\
8	160\\
9	170\\
10	178\\
11	187\\
12	197\\
13	208\\
14	221\\
15	235\\
16	250\\
17	266\\
18	284\\
19	303\\
20	323\\
21	343\\
22	363\\
23	382\\
24	399\\
25	413\\
26	422\\
27	428\\
28	433\\
29	436\\
};

\addplot 
  table[row sep=crcr]{%
1	27\\
2	53\\
3	78\\
4	102\\
5	124\\
6	144\\
7	162\\
8	176\\
9	188\\
10	198\\
11	207\\
12	215\\
13	222\\
14	230\\
15	239\\
16	249\\
17	260\\
18	273\\
19	287\\
20	302\\
21	318\\
22	335\\
23	352\\
24	369\\
25	386\\
26	401\\
27	415\\
28	427\\
29	437\\
};

\end{axis}

\end{tikzpicture}%
\caption{Inner iteration numbers for Example~1: cumulative sum of the inner iteration steps against the outer iteration number  
for fixed inner tolerances and different dynamic stopping criteria.}
\label{fig:ex1_innit}
\end{figure}
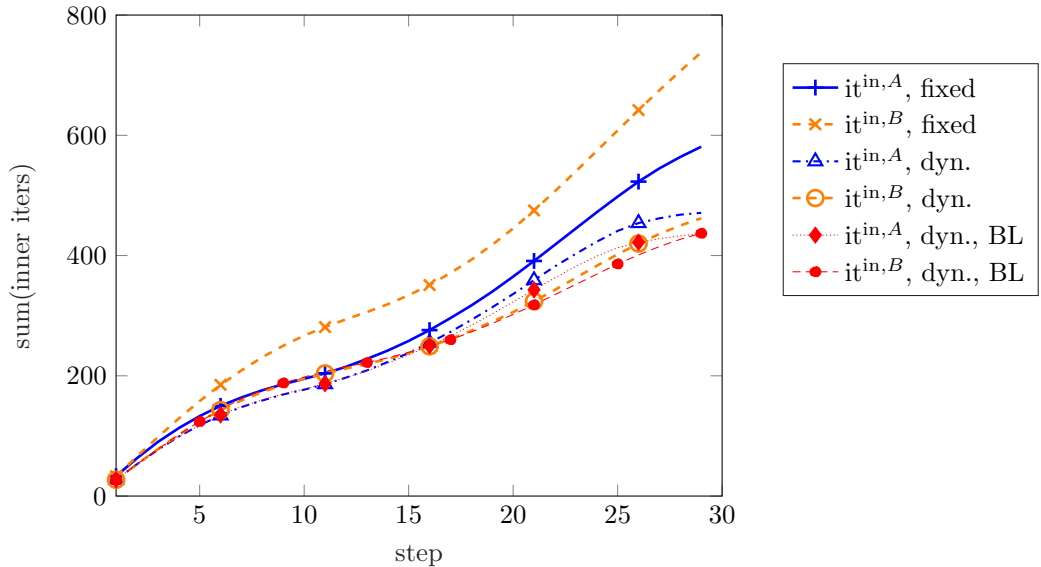

For Example~2 we can make similar observation from the data in Table~\ref{tab:ex_results}. Using the dynamic stopping criteria leads to savings in the compung times of roughly 30\%.  Figure~\ref{fig:ex2_res} illustrates again the history of the scaled computed Sylvester residual norm $\fR_k^{\text{comp.}}$ and the inner residual norms $\|r_k^A\|$, $\|r_k^A\|$. For Example~2, the strategy [dynamic, BL, $B$] actually leads to a very small reduction in the computing time due to a small change in the iteration numbers $\sum$it$^{\text{in},A}$, $\sum$it$^{\text{in},B}$. This is also visible in Figure~\ref{fig:ex2_res}, where the inner residual norms $\|r^A\|$ are in this variant slightly larger but the $\|r_k^B\|$ (corresponding to the much smaller linear systems) are kept at a much lower level. 

\begin{figure}[t]
%
%
\begin{tikzpicture}

\begin{axis}[%
width=0.6\linewidth,
height=0.5\linewidth,
cycle list name=res,
xmin=1,
xmax=25,
xlabel style={font=\color{white!15!black}},
xlabel={step $k$},
ylabel={$\fR_k$, $\|r_k^{A,B}\|$},
ymode=log,
ymin=1e-12,
ymax=1,
yminorticks=true,
ylabel style={font=\color{white!15!black}},
axis background/.style={fill=white},
legend style={legend cell align=left, align=left, draw=white!15!black},
axis background/.style={fill=white},
legend style={at={(1.1,0.9)}, anchor=north west, legend cell align=left, align=left, draw=white!15!black},
legend entries= {$\fR_k$, {$\|r_k^A\|$ dyn., BL, mid}, {$\|r_k^B\|$ dyn., BL, mid},{$\|r_k^A\|$ dyn., BL, $B$}, {$\|r_k^B\|$ dyn., BL, , $B$}}
]
\addplot 
  table[row sep=crcr]{%
1	0.905949848939738\\
2	0.84925747556916\\
3	0.837934254622744\\
4	0.815875767090492\\
5	0.794687432790681\\
6	0.764332046777367\\
7	0.729494431137666\\
8	0.711427577943622\\
9	0.685410723908695\\
10	0.60796007575727\\
11	0.553904648607959\\
12	0.517678109257171\\
13	0.494689428406385\\
14	0.0613421603292294\\
15	0.00943930287775688\\
16	0.00227264357844489\\
17	0.00148197227999537\\
18	0.000729479166013218\\
19	0.000184133403092876\\
20	1.94832840045619e-05\\
21	1.6411970260935e-05\\
22	1.09056102967108e-06\\
23	5.07404117261671e-07\\
24	1.82664615855496e-08\\
25	4.06923813387215e-10\\
};

\addplot 
  table[row sep=crcr]{%
1	1.71183266485668e-10\\
2	2.0264020500727e-10\\
3	1.74069389728628e-10\\
4	1.78120779173924e-10\\
5	1.62886583368341e-10\\
6	1.21994823508787e-10\\
7	1.61085854100628e-10\\
8	1.64429963265045e-10\\
9	1.44340412001233e-10\\
10	1.8675598914033e-10\\
11	1.98951705284965e-10\\
12	2.05025125815512e-10\\
13	2.04275208748937e-10\\
14	3.51806923073292e-11\\
15	8.38282708301924e-11\\
16	4.66567882473601e-10\\
17	6.50043817360944e-09\\
18	6.75552372372697e-09\\
19	1.44843112219257e-08\\
20	3.32087320816809e-08\\
21	6.89301115441349e-07\\
22	2.82709099102681e-07\\
23	8.9263766816946e-06\\
24	7.04883283133642e-06\\
25	0.000321252913354224\\
};

\addplot 
  table[row sep=crcr]{%
1	1.25813359779649e-10\\
2	1.35085831877932e-10\\
3	1.8220655051018e-10\\
4	1.87094597347677e-10\\
5	1.4575256752865e-10\\
6	1.62380843307784e-10\\
7	1.79341668323564e-10\\
8	1.65891656046914e-10\\
9	2.0235362099889e-10\\
10	1.42285276332391e-10\\
11	1.11933593884355e-10\\
12	1.74248479506592e-10\\
13	1.70857695270897e-10\\
14	1.56422187687165e-10\\
15	1.43456630933445e-10\\
16	1.08468631973708e-09\\
17	5.49557028630285e-09\\
18	3.45576650719637e-09\\
19	8.29340363585243e-09\\
20	7.51101957016964e-08\\
21	7.4981112109161e-07\\
22	1.08267845487971e-06\\
23	8.96053956841324e-06\\
24	2.41522582787191e-05\\
25	0.000876022487806166\\
};

\addplot 
  table[row sep=crcr]{%
1	1.71183266485668e-10\\
2	2.0264020500727e-10\\
3	1.74069389728628e-10\\
4	1.78120779173924e-10\\
5	1.62886583368341e-10\\
6	1.21994823508787e-10\\
7	1.61085854100628e-10\\
8	1.64429963265045e-10\\
9	1.44340412001233e-10\\
10	1.8675598914033e-10\\
11	1.98951705284965e-10\\
12	2.05025125815512e-10\\
13	2.04275208748937e-10\\
14	3.51806923073292e-11\\
15	8.38282708301924e-11\\
16	4.66567882473601e-10\\
17	8.51495444606341e-09\\
18	2.16066207265827e-08\\
19	3.87945282015891e-08\\
20	1.42036634184921e-07\\
21	1.14770678276554e-06\\
22	2.82709141477852e-07\\
23	8.94297221166888e-06\\
24	7.21558758374942e-05\\
25	0.000320861517254436\\
};

\addplot 
  table[row sep=crcr]{%
1	1.25813359779649e-10\\
2	1.35085831877932e-10\\
3	1.8220655051018e-10\\
4	1.87094597347677e-10\\
5	1.4575256752865e-10\\
6	1.62380843307784e-10\\
7	1.79341668323564e-10\\
8	1.65891656046914e-10\\
9	2.0235362099889e-10\\
10	1.42285276332391e-10\\
11	1.11933593884355e-10\\
12	1.74248479506592e-10\\
13	1.70857695270897e-10\\
14	1.56422187687165e-10\\
15	1.43456630933445e-10\\
16	1.09869904675494e-10\\
17	1.46476813970356e-10\\
18	5.87715604735357e-11\\
19	4.01744419689191e-11\\
20	1.74219899204921e-10\\
21	3.66727520133086e-09\\
22	3.94103130243168e-09\\
23	2.99233783200009e-08\\
24	3.91360366062511e-08\\
25	5.81126777046091e-07\\
};
\addplot [color=black,loosely dotted,forget plot]
  table[row sep=crcr]{1	1e-08\\
29	1e-08\\
};
\end{axis}

\end{tikzpicture}%
\caption{Residual norms for Example~2: Scaled computed residual norms $\fR_k^{\text{comp}}$ and inner residual norms $\|r_k^A\|$, $\|r_k^A\|$ against the outer iteration number for different dynamic stopping criteria.}
\label{fig:ex2_res}
\end{figure}

In Example~3 only the sequence of linear systems defined by $A$ is solved iteratively, while direct solvers are used for the other one with $B$. Hence, only a threshold for $\|r^A_k\|$ is set by simple setting $\|r_k^B\|=0$ in~\eqref{ADI_relax2_prak}. The results in Table~\ref{tab:ex_results} show again reduced numbers of inner iterations gained with the dynamic stopping strategies and savings of approximately 25\% in the computing time.

The results for the generalized Sylvester equation of Example~4 are in line with those of the previous examples. Here, the dynamic criteria lead runtime savings between roughly 33\% and 43\%. The strategy [dynamic, BL, $B$] does seem to slightly slow down the LR-ADI iteration as five more outer steps are needed, but overall the total number of inner iteration steps as well as the computing time is lower compared to using fixed inner tolerances. We also see that the inexact LR-ADI with fixed tolerances does actually take slightly more time than the exact ADI iteration. The arising linear systems in Example~4 seem to be harder for the inner solver (MINRES) compared to the other examples. That is the reason why we opted to update the preconditioners in every step. Using smaller drop tolerances for the incomplete Cholesky factorization did not lead to significant improvements. 

A further noteworthy observation in these and other experiments (not reported here) is that sometimes in the inexact ADI iteration with fixed tolerances, the required residual thresholds could not or hardly be achieved by the inner solver. This was much less frequently an issue with the proposed dynamic criteria because there the smallest inner residual norms are typically only required in the first outer iteration steps.  

\section{Conclusions and future research perspectives}\label{sec:concl}
We considered the inexact low-rank ADI iteration for large-scale Sylvester equations and proposed dynamic stopping criteria for the inner solvers which are used to iteratively solve to arising linear systems. We provided theoretical results showing that, with an appropriate choice for the inner accuracies, the residuals in inexact ADI iteration are only a small perturbation of the exact ADI iteration. Moreover, the practical implementation of the dynamic stopping criteria was discussed and numerical experiments confirmed the effectiveness of these strategies, leading the less inner iteration steps and, hence, smaller computing times compared to the case when constant inner tolerances were used. 

A potential next research direction might be subspace recycling techniques for the the sequences of shifted linear systems by, e.g., storing the Krylov
basis obtained from solving one linear system. This was, e.g., discussed for the Lyapunov LR-ADI in~\cite{morLi00} and a similar idea was developed in~\cite{BenDS23} leading to an efficient hybrid method. Corresponding strategies for the Sylvester-ADI iteration are a promising future research. Conducting similar studies for the linear systems inside rational Krylov projection methods for Sylvester equations~\cite{PalS18} is, of course, also a further worthwhile research direction. 

\end{document}